\DeclarePairedDelimiter\floor{\lfloor}{\rfloor}
\author{Tuomas Orponen}
\title{On the distance sets of AD-regular sets}
\address{University of Helsinki, Department of Mathematics and Statistics}
\subjclass[2010]{28A80 (Primary) 28A78 (Secondary)}
\thanks{T.O. is supported by the Academy of Finland through the grant Restricted families of projections and connections to Kakeya type problems.}
\email{tuomas.orponen@helsinki.fi}
\newcommand{\R}{\mathbb{R}}
\newcommand{\N}{\mathbb{N}}
\newcommand{\calD}{\mathcal{D}}
\newcommand{\calH}{\mathcal{H}}
\newcommand{\calF}{\mathcal{F}}
\newcommand{\spt}{\operatorname{spt}}
\newcommand{\Hd}{\dim_{\mathrm{H}}}
\newcommand{\Pd}{\dim_{\mathrm{p}}}
\newcommand{\Bd}{\overline{\dim}_{\mathrm{B}}}
\newcommand{\calP}{\mathcal{P}}
\newcommand{\calE}{\mathcal{E}}
\newcommand{\diam}{\operatorname{diam}}
\newcommand{\dist}{\operatorname{dist}}
\newcommand{\interior}{\operatorname{int}}
\numberwithin{equation}{section}
\theoremstyle{plain}
\newtheorem{thm}[equation]{Theorem}
\newtheorem{lemma}[equation]{Lemma}
\newtheorem{cor}[equation]{Corollary}
\newtheorem{proposition}[equation]{Proposition}
\theoremstyle{definition}
\newtheorem{definition}[equation]{Definition}
\theoremstyle{remark}
\newtheorem{remark}[equation]{Remark}
\begin{document}

\begin{abstract} I prove that if $\emptyset \neq K \subset \R^{2}$ is a compact $s$-Ahlfors-David regular set with $s \geq 1$, then 
\begin{displaymath} \Pd D(K) = 1, \end{displaymath}
where $D(K) := \{|x - y| : x,y \in K\}$ is the distance set of $K$, and $\Pd$ stands for packing dimension.

The same proof strategy applies to other problems of similar nature. For instance, one can show that if $\emptyset \neq K \subset \R^{2}$ is a compact $s$-Ahlfors-David regular set with $s \geq 1$, then there exists a point $x_{0} \in K$ such that $\Pd K \cdot (K - x_{0}) = 1$. Specialising to product sets, one derives the following sum-product corollary: if $A \subset \R$ is a non-empty compact $s$-Ahlfors-David regular set with $s \geq 1/2$, then 
\begin{displaymath} \Pd [A(A - a_{1}) + A(A - a_{2})] = 1 \end{displaymath}
for some $a_{1},a_{2} \in A$. In particular, $\Pd [AA + AA - AA - AA] = 1$. In all of the results mentioned above, compactness can be relaxed to boundedness and $\calH^{s}$-measurability, if packing dimension is replaced by upper box dimension.
\end{abstract}

\maketitle

\section{Introduction}

Given a planar set $K$, the \emph{distance set problem} asks for a relationship between the size of $K$, and the size of the distance set
\begin{displaymath} D(K) := \{|x - y| : x,y \in K\}. \end{displaymath}
For finite sets $K$, the problem is due to P. Erd\H{o}s from 1946, and the \emph{Erd\H{o}s distance conjecture} states that the cardinality of $D(K)$ should satisfy $|D(K)| \gtrsim |P|/\sqrt{\log |P|}$. L. Guth and N. Katz \cite{GK} nearly resolved the question in 2011 by showing that $|D(K)| \gtrsim |P|/\log |P|$. 

The "continuous" version of the distance set problem was proposed by K. Falconer \cite{Fa} in 1985. The \emph{Falconer distance conjecture} claims that if $K \subset \R^{2}$ is a Borel set with $\dim K > 1$, then $D(K)$ has positive length. As far as I know, the current records in this setting are the following theorems of T. Wolff \cite{Wo} from 1999 and J. Bourgain \cite{Bo} from 2003:
\begin{thm}[Wolff] If $K \subset \R^{2}$ is Borel with $\dim K > 4/3$, then $D(K)$ has positive length. \end{thm}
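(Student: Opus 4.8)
I would prove this via the Fourier-analytic framework of Falconer and Mattila, with Wolff's $L^{2}$ estimate for circular averages as the essential new ingredient. By Frostman's lemma, after passing to a compact subset of $K$, I may fix a compactly supported probability measure $\mu$ on $K$ and an exponent $t \in (4/3, \dim K)$ with
\[ c_{t}(\mu) := \sup_{x \in \R^{2},\, r > 0} r^{-t}\mu(B(x,r)) < \infty; \]
then the Riesz energy $I_{s}(\mu) := \iint |x-y|^{-s}\, d\mu(x)\, d\mu(y)$ is finite for every $s < t$. Pushing $\mu \times \mu$ forward under $(x,y) \mapsto |x-y|$ produces the distance measure $\nu$, a Borel probability measure supported on the compact set $D(K)$. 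Since $\mathcal{L}^{1}(D(K)) = 0$ would force $\nu(D(K)) = 0$, contradicting $\nu(D(K)) = 1$, it suffices to show that $\nu$ is absolutely continuous, and I will in fact aim for the stronger conclusion $\nu \in L^{2}(\R)$.

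The plan is to reduce $\nu \in L^{2}$ to a decay estimate for spherical averages of $\widehat{\mu}$. Write $\sigma := \mathcal{H}^{1}|_{S^{1}}$ for arclength on the unit circle and set $\sigma_{\mu}(R) := \int_{S^{1}} |\widehat{\mu}(R\omega)|^{2}\, d\sigma(\omega)$, the spherical average of $|\widehat{\mu}|^{2}$ over the circle of radius $R$. The standard computation of Mattila shows that $\nu$ has an $L^{2}$ density as soon as the \emph{Mattila integral}
\[ \int_{1}^{\infty} \sigma_{\mu}(R)^{2}\, R\, dR \]
is finite. Two estimates for $\sigma_{\mu}$ come for free. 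Integrating $|\widehat{\mu}|^{2}$ against $|\xi|^{s-2}$ in polar coordinates gives $\int_{1}^{\infty} \sigma_{\mu}(R)\, R^{s-1}\, dR \lesssim I_{s}(\mu) < \infty$ for $s < t$. And, expanding the square, $\sigma_{\mu}(R) = \iint \widehat{\sigma}(R(x-y))\, d\mu(x)\, d\mu(y)$, so the classical decay $|\widehat{\sigma}(\xi)| \lesssim (1+|\xi|)^{-1/2}$ yields the crude pointwise bound $\sigma_{\mu}(R) \lesssim R^{-1/2} I_{1/2}(\mu) \lesssim R^{-1/2}$. Feeding the crude bound into the Mattila integral through $\sigma_{\mu}(R)^{2}R \lesssim \sigma_{\mu}(R)\, R^{1/2}$ and comparing with the energy estimate already recovers Mattila's theorem ($\dim K > 3/2$); to reach $4/3$ the crude decay $R^{-1/2}$ must be improved.

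That improvement is the crux, and it is Wolff's contribution. What I need is an estimate of the form
\[ \sigma_{\mu}(R) \lesssim_{t,\epsilon} c_{t}(\mu)\, R^{\epsilon - \beta(t)}, \qquad R \geq 1, \]
with an exponent $\beta(t)$, increasing in $t$, sharp at the endpoint ($\beta(2) = 1$), and satisfying $\beta(t) > 2 - t$ exactly for $t > 4/3$; concretely $\beta(t) = t/2$ is admissible on $[1,2]$. The strategy for proving it is to discretise $\mu$ at scale $\delta := R^{-1}$ and exploit the oscillatory factor $\cos(2\pi R|x-y| - \tfrac{\pi}{4})$ in the asymptotics of $\widehat{\sigma}(R(\cdot))$, which the crude estimate simply discards. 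A $TT^{*}$ / duality argument turns $\sigma_{\mu}(R)$ into a weighted count of the overlaps among the $\delta$-neighbourhoods of the circles $\{z : |z-x| = \rho\}$ with $x \in \spt \mu$, and this recasts the whole question as a Kakeya-type problem for $\delta$-annuli: two $\delta$-annuli of well-separated radii meet only in a constrained way. Proving this geometric/combinatorial estimate — in essence Wolff's circular maximal function bound, established by a C\'ordoba-type square-function decomposition together with the tangency combinatorics of circles — is the step I expect to be the main obstacle; the rest of the argument is soft.

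Granting Wolff's estimate, the proof finishes as in Mattila's scheme but with the stronger exponent. Fix $t \in (4/3, \dim K)$ as above, choose $\epsilon > 0$ small, and pick $s < t$ with $s - 1 \geq 1 - \beta(t) + \epsilon$, which is possible precisely because $\beta(t) > 2 - t$. Using Wolff's bound to control one of the two factors of $\sigma_{\mu}(R)$ and then $R^{1-\beta(t)+\epsilon} \leq R^{s-1}$ for $R \geq 1$,
\[ \int_{1}^{\infty} \sigma_{\mu}(R)^{2}\, R\, dR \lesssim \int_{1}^{\infty} \sigma_{\mu}(R)\, R^{1-\beta(t)+\epsilon}\, dR \leq \int_{1}^{\infty} \sigma_{\mu}(R)\, R^{s-1}\, dR \lesssim I_{s}(\mu) < \infty. \]
Hence the Mattila integral converges, $\nu$ has a density in $L^{2}(\R)$, and in particular $D(K)$ has positive length.
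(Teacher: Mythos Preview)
The paper does not prove this statement at all: Wolff's theorem is quoted as a background result with a bare citation to \cite{Wo}, and the paper's own work concerns AD-regular sets via entropy methods, not the Fourier-analytic machinery you describe. So there is no ``paper's proof'' to compare against.

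That said, your outline is the correct and standard route to Wolff's theorem. The reduction via Frostman's lemma and the distance measure $\nu$, the Mattila-integral criterion $\int_{1}^{\infty} \sigma_{\mu}(R)^{2}\,R\,dR < \infty$ for $\nu \in L^{2}$, the recovery of Mattila's $3/2$ threshold from the crude bound $\sigma_{\mu}(R) \lesssim R^{-1/2}$, and the final arithmetic showing that $\beta(t) = t/2$ beats $2 - t$ exactly when $t > 4/3$ are all accurate. The one step you explicitly ``grant'' --- the spherical-average decay $\sigma_{\mu}(R) \lesssim_{t,\epsilon} c_{t}(\mu)\,R^{\epsilon - t/2}$ --- is the entire substance of Wolff's paper, so in effect your proposal, like the paper itself, is citing Wolff for the hard part; the rest is the soft Mattila framework, which you have reproduced correctly.
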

\begin{thm}[Bourgain] If $K \subset \R^{2}$ is Borel with $\dim K \geq 1$, then $\Hd D(K) \geq 1/2 + \epsilon$ for some (small) absolute constant $\epsilon > 0$. \end{thm}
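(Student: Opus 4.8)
The approach, which follows Bourgain, is to reduce the statement to a four-point incidence estimate for a Frostman measure on $K$ and then prove that estimate by contradiction, the contradiction being supplied by the discretised sum--product theorem. Fix a small $\eta > 0$; since $\Hd K \ge 1$, Frostman's lemma provides a compactly supported probability measure $\mu$ on $K$ with $\mu(B(x,r)) \lesssim r^{1 - \eta}$ for all $x,r$. Let $D\mu$ denote the push-forward of $\mu \times \mu$ under $(x,y) \mapsto |x-y|$, a measure on $D(K)$. Since $\Hd D(K) \ge \Hd(\spt D\mu) \ge \Hd D\mu$, and $\Hd D\mu \ge t$ whenever the $t$-energy $\iint |s-s'|^{-t}\,dD\mu\,dD\mu$ is finite, it suffices to prove that for some absolute constant $\epsilon > 0$,
\[ (\mu \times \mu \times \mu \times \mu)\bigl\{ (x,y,z,w) : \bigl| |x-y| - |z-w| \bigr| \le \rho \bigr\} \lesssim_{\epsilon} \rho^{1/2 + \epsilon} \]
for all small $\rho$: this yields $\Hd D(K) \ge 1/2 + \epsilon$. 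I would establish the displayed bound by contradiction. If it fails, then along some sequence of scales $\rho = \delta \to 0$ the left-hand side is at least $\delta^{1/2 + \epsilon/2}$.

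Fix such a bad scale $\delta$. Discretising $\mu$ into $\approx \delta^{-(1-\eta)}$ atoms of comparable mass, supported on a maximal $\delta$-separated set $A \subset \spt\mu$, and unwinding the displayed expression, the failure above becomes an anomalously large lower bound for the \emph{distance energy} $E := \#\{(a,b,c,d) \in A^4 : \bigl| |a-b| - |c-d| \bigr| \le \delta\}$ — large enough that, were it concentrated, it would force $D(A)$ to be covered by only about $\delta^{-1/2 - C\epsilon}$ intervals of length $\delta$. The subtle point is that large energy does not by itself bound the size of a distance set, so I would first apply a Balog--Szemer\'edi--Gowers-type refinement; in this non-group setting this is itself a genuine step, most naturally carried out through the two-pin coordinatisation described below, or through the measure $D\mu$, because the hypothesis $\Hd D(K) < 1/2 + \epsilon$ gives \emph{no} information on the box dimension of $D(K)$. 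After this refinement one is left with a $\delta$-separated set — still called $A$ — with $\#A \gtrsim \delta^{-1 + C(\epsilon + \eta)}$, inheriting the non-concentration bound $\#(A \cap B(x,r)) \lesssim (r/\delta)^{1 - C\eta}$ from $\mu$, and with $D(A)$ contained in a union $D$ of at most $\delta^{-1/2 - C\epsilon}$ intervals of length $\delta$ which, after one more pigeonhole, is roughly uniformly distributed at scale $\delta$.

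The geometric core is that $A$ is, at scale $\delta$, coordinatised by pairs of distances. Choosing two pins $x_1, x_2 \in A$ in general position — far apart, with a positive proportion of $A$ staying $\gtrsim 1$-transverse to the line through $x_1, x_2$, which holds for many pairs by a Fubini argument — the map $\Psi(y) = (|x_1 - y|, |x_2 - y|)$ is, at scale $\delta$, a bi-Lipschitz bijection between a large chunk $A' \subseteq A$ and a set $P \subseteq D \times D$ with $\#P \gtrsim \#A' \gtrsim \delta^{-1 + C(\epsilon+\eta)}$. Since $\#D \approx \delta^{-1/2}$, a pigeonhole yields many first coordinates $a$ whose fibres $P_a := \{b : (a,b) \in P\} \subseteq D$ form a large-density subset of $D$; this is the product-like structure to be exploited. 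Introducing a third pin $x_3 \in A$ and the algebraic identity $|x_3 - y| = F_{x_1,x_2,x_3}\bigl(|x_1 - y|, |x_2 - y|\bigr)$, where $F$ is an explicit semialgebraic function of two real variables, one obtains a map $F \colon P \to (\delta\text{-neighbourhood of }D)$ carrying a product-like set of size $\approx \delta^{-1}$ into a set of box-counting number $\lesssim \delta^{-1/2 - C\epsilon}$. A direct computation shows that the mixed second derivative $\partial_1 \partial_2 F$ vanishes identically (for a fixed general-position pair $x_1, x_2$, as $x_3$ ranges over $A$) only when $A$ lies, up to a negligible part, on a single line or circle; that degenerate case is disposed of separately, since a subset of dimension $\ge 1 - C\eta$ of a line or circle has distance set of dimension $\ge 1 - C\eta > 1/2 + \epsilon$. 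Off the degenerate case, $\partial_1 \partial_2 F$ is bounded away from $0$ on a large portion of $P$, and localising to a suitable pair of dyadic subsquares and applying an affine change of variables one reads off, inside a rescaled copy of $D$, a set $A_0$ of size $\approx \delta^{-1/2}$, satisfying the requisite non-concentration, whose sum set $A_0 + A_0$ and product set $A_0 \cdot A_0$ both have box-counting number $\lesssim \delta^{-1/2 - C\epsilon}$. Because $\dim A_0 \approx 1/2$ is bounded away from $0$ and $1$, Bourgain's discretised sum--product theorem gives $\#(A_0 + A_0) + \#(A_0 \cdot A_0) \ge \delta^{-1/2 - \eta_0}$ for a gain $\eta_0 > 0$ that is an \emph{absolute} constant (depending only on the separation of $\dim A_0$ from $\{0,1\}$, hence independent of $\epsilon$ and $\eta$). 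Choosing $\epsilon$ and $\eta$ small enough relative to $\eta_0$, this contradicts the previous sentence once $\delta$ is small, which proves the displayed estimate and hence the theorem.

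The step I expect to be genuinely hard is the reduction in the second paragraph. The data is only a bound on the Hausdorff dimension of $D(K)$, which is compatible with $D(K)$ having full box dimension and with $D(K)$ clumping at intermediate scales; one therefore cannot discretise $D(K)$ directly, and must route everything through $D\mu$, a multiscale pigeonhole, and an energy / Balog--Szemer\'edi--Gowers-type refinement, and then verify that neither the coordinatisation $\Psi$ nor the affine linearisation of $F$ destroys the non-concentration hypotheses demanded by the sum--product theorem. By contrast, the Hessian computation for $F$, the line/circle degenerate case, and the invocation of Bourgain's sum--product theorem as a black box are all routine.
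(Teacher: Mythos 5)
The paper does not prove this statement at all: it is quoted as background, attributed to Bourgain \cite{Bo}, and the actual argument lives in that paper together with the Katz--Tao reductions it relies on. So there is no in-paper proof to compare against; I can only assess your sketch on its own terms. Your outline does reproduce the correct architecture of the known proof (Frostman measure, reduction to a single-scale discretised statement, two-pin coordinatisation $\Psi(y)=(|x_{1}-y|,|x_{2}-y|)$, a third pin producing a map $F$ with ring-like structure, and Bourgain's discretised sum--product theorem as the engine), but as written it is an annotated table of contents rather than a proof: the two steps you yourself flag as ``genuinely hard'' are precisely the ones that carry the mathematical content, and they are not routine consequences of anything you have set up.

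Concretely, two gaps. First, the passage from ``$(\mu^{\times 4})\{\,||x-y|-|z-w||\le\delta\,\}\ge\delta^{1/2+\epsilon/2}$ at one bad scale'' to ``a $\delta$-separated set $A$ with the stated non-concentration whose distance set is covered by $\delta^{-1/2-C\epsilon}$ uniformly distributed $\delta$-intervals'' is the entire content of the Katz--Tao reduction of the Falconer problem to the discretised ring conjecture; it involves a multiscale pigeonhole and a refinement argument in a non-group setting, and it is exactly where naive attempts fail (large distance energy is compatible with $D(A)$ being large). Asserting that ``after this refinement one is left with\dots'' is assuming the conclusion of that paper, not proving it. Second, your claim that the sum--product gain $\eta_{0}$ is an absolute constant ``independent of $\epsilon$ and $\eta$'' hides a quantifier ordering that must be checked: the non-concentration exponent of the rescaled set $A_{0}$ that you feed into the discretised sum--product theorem degrades with $\eta$ and with the constants lost in the refinement and in the affine linearisation of $F$, and Bourgain's theorem gives a gain depending on those exponents. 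The argument does close (one fixes the non-concentration parameters first and then chooses $\epsilon,\eta$ small), but your one-sentence version does not establish this, and without it the final ``choosing $\epsilon$ and $\eta$ small enough'' step is circular. As it stands, the proposal is a correct roadmap to Bourgain's and Katz--Tao's papers, not a self-contained proof.
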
 
In Bourgain's result, $\Hd$ stands for Hausdorff dimension. The following theorem is the main result of this note:

\begin{thm}\label{main} Assume that $\emptyset \neq K \subset \R^{2}$ is a compact $s$-Ahlfors-David-regular set with $s \geq 1$. Then 
\begin{displaymath} \Pd D(K) = 1, \end{displaymath}
where $D(K) = \{|x - y| : x,y \in K\}$ is the distance set of $K$.
\end{thm}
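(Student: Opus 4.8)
The upper bound $\Pd D(K) \le 1$ is automatic, since $D(K) \subset [0,\diam K] \subset \R$, so the whole problem is the lower bound, and the plan is to reduce it to a statement about the \emph{box} dimension of pieces of $K$.

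First I would record the box-dimension analogue: for any non-empty $s$-AD-regular set $K'$ with $s \ge 1$, one wants $\Bd D(K') = 1$. To see that this implies the theorem, suppose towards a contradiction that $\Pd D(K) < 1$. Using the identity $\Pd E = \inf\{\sup_i \Bd E_i : E \subset \bigcup_i E_i\}$ (with each $E_i \subset E$), write $D(K) \subset \bigcup_i \overline{E_i}$ with $\beta := \sup_i \Bd \overline{E_i} < 1$. Since $K$ is compact, $D(K)$ — being the continuous image of $K \times K$ — is compact, hence a Baire space, so some $\overline{E_{i_0}}$ contains $D(K) \cap J$ for an open interval $J$ meeting $D(K)$. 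Pick $\rho_0 = |p - q| \in D(K) \cap J$ with $p,q \in K$ and $\rho_0 > 0$, and pick $r \in (0,\rho_0/10)$ with $[\rho_0 - 3r,\rho_0 + 3r] \subset J$. Then $K_p := K \cap B(p,r)$ and $K_q := K \cap B(q,r)$ are non-empty and, after a harmless shrinking of $r$, are $s$-AD-regular with constants comparable to those of $K$; moreover every distance $|x-y|$ with $x \in K_p$, $y \in K_q$ lies in $J$. Consequently the \emph{bilinear} distance set $\Delta := \{|x-y| : x \in K_p,\ y \in K_q\}$ is contained in $\overline{E_{i_0}}$, so $\Bd \Delta \le \beta < 1$. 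Thus it suffices to prove $\Bd \Delta = 1$ for two far-apart AD-regular pieces — and the same assertion, applied to an arbitrary such pair, is exactly the box-dimension version of the theorem in the non-compact case.

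To analyse $\Delta$ I would linearise. For fixed $y \in K_q$ the distance function $x \mapsto |x-y|$ is a smooth submersion on $B(p,r)$ whose gradient $(x-y)/|x-y|$ stays within $O(r/\rho_0)$ of the fixed unit vector $\theta(y) := (p-y)/|p-y|$; equivalently, on $B(p,r)$ the distance circles centred at $y$ are straight up to an error $O(r^2/\rho_0) \ll r$. Hence, comparing $|\cdot - y|$ with the orthogonal projection $\pi_{\theta(y)}$, the curvature of these circles contributes only a lower-order term, and matters reduce to the projection statement: it is enough to find a single $y \in K_q$ — equivalently a single direction $\theta$ in the radial projection $\Theta := \pi^{\mathrm{rad}}_p(K_q) \subset S^1$ of $K_q$ seen from $p$ — with $\dim_{\mathrm H}\pi_{\theta(y)}(K_p) = 1$, since this forces $\Bd \Delta \ge \Bd \pi_{\theta(y)}(K_p) = 1$. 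Now $K_p$ is non-empty $s$-AD-regular with $s \ge 1$, so by Marstrand's projection theorem there is a full-measure set $G \subset S^1$ of directions $\theta$ with $\dim_{\mathrm H}\pi_\theta(K_p) = 1$; one therefore needs $\Theta \cap G \ne \emptyset$. If $K$ lies in a line or a circle, then $s = 1$, so $\calH^1(K) > 0$, so $K$ carries positive $\calH^1$-measure on that line or circle, and the Steinhaus theorem (applied to $K - K$, respectively to the set of angular differences) shows that $D(K)$ already contains an interval, so $\Pd D(K) = 1$. In the remaining case $K_q$ is not contained in a line through $p$; when $s > 1$ one finishes easily, since $\Theta$ has Hausdorff dimension $1$ for a suitable $p$ (Marstrand for radial projections) while $S^1 \setminus G$ has dimension $\le 2 - s < 1$ by Falconer's exceptional-set estimate, so $\Theta \not\subset S^1 \setminus G$.

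The real difficulty is the critical exponent $s = 1$. There $S^1 \setminus G$ may have full dimension $1$, and $\Theta$ need not carry an absolutely continuous measure, so a crude comparison of dimensions no longer suffices. To get past this I would invoke Bourgain's projection theorem (equivalently, the discretised sum--product theorem): because $K_p$ is AD-regular it has the same coarse structure at every scale $\delta$ with uniform constants, and this self-similarity is exactly what allows one to iterate the $\varepsilon$-gain supplied by Bourgain's theorem across all scales and conclude that $\pi_\theta(K_p)$ attains the full dimension $1$ for some $\theta \in \Theta$, provided only that $\Theta$ has positive dimension — which, for AD-regular $K_q$ of dimension $\ge 1$ not on a line, follows from the known lower bounds for radial projections. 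This multiscale bootstrapping is the heart of the argument, and it is also what yields the sum--product corollaries advertised in the abstract: in the product case $K = A \times A$, unwinding the projections turns $\pi_\theta$ into multiplication, so "every projection of $K_p$ is small in every relevant direction" becomes the statement that $A$ carries neither additive nor multiplicative structure, which is precisely what Bourgain's theorem forbids. Everything else — the Baire reduction, the passage from $D(K)$ to the bilinear set $\Delta$, the linearisation, and the case $s > 1$ — is routine.
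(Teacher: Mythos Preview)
Your linearisation and the Baire reduction are in the right spirit, but the plan diverges from the paper's at the decisive point, and the $s=1$ case is a genuine gap, not something routine.

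The paper never tries to locate a \emph{single} good direction inside a small radial set $\Theta$. Instead it uses that, for purely $1$-unrectifiable $K$ (or for $s>1$, via Marstrand's slicing theorem), the direction set $R_{x}=\{(y-x)/|y-x|:y\in K\setminus\{x\}\}$ is \emph{dense} in $S^{1}$ for $\calH^{s}$-a.e.\ $x\in K$. One then picks a full $2^{-m}$-net $S_{2^{m}}\subset R_{x}$, associates to each $e\in S_{2^{m}}$ a distance-type map $f_{e}(y)=|y-x_{e}|^{2}/(2|x_{e}|)$, and proves via a multi-scale entropy decomposition combined with an \emph{entropy} form of Marstrand's projection theorem that the \emph{average} $\sum_{e}p_{e}\,H_{n}(f_{e\sharp}\mu)\ge t$ for every $t<1$, once $m$ is large. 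No Bourgain-type input is used anywhere. Because the conclusion is only about the average over $e\in S_{2^{m}}$, the Baire step cannot be a single application: the images $f_{e}(K\cap Q_{0})$ lie in different portions of $D(K)$ (the points $x_{e}$ sit at various distances from the small square $Q_{0}$), so one iterates Baire once per direction, at each step trapping one more $f_{e}$-image in a set of small box dimension, until after $|S_{2^{m}}|$ steps all of them are trapped and the averaged lower bound is contradicted.

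Your plan instead needs, after one Baire step, that $\Bd\pi_{\theta}(K_{p})=1$ for \emph{some} $\theta$ in $\Theta=\pi^{\mathrm{rad}}_{p}(K_{q})$, a set you only know to have positive dimension. At $s=1$ Marstrand's exceptional set may have full dimension, so no dimension comparison helps, and you appeal to ``Bourgain's projection theorem \ldots iterate the $\varepsilon$-gain across all scales.'' This is the gap. Bourgain's discretised projection theorem gives a single-scale increment of the type $\dim\pi_{\theta}E\ge s/2+\varepsilon$ for most $\theta$ in a positive-dimensional direction set; upgrading this, even for AD-regular $E$, to the statement that some $\theta\in\Theta$ achieves the \emph{full} value $1$ is a separate theorem, not a routine iteration, and your outline gives no mechanism for it. The paper's argument was designed precisely to bypass any such appeal, by trading the search for one good direction for an average over a dense net of directions --- and this trade is exactly what forces the iterative (rather than one-shot) Baire argument in the passage from box to packing dimension. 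As written, then, the $s=1$ case is unproven, and since that is the case the whole paper is built around, the proposal is not yet a proof.
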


The proof of Theorem \ref{main} can be easily modified to give various other results of similar nature. I have Alex Iosevich to thank for the following suggestions:
\begin{thm}\label{dotProducts} Assume that $\emptyset \neq K \subset \R^{2}$ is a compact $s$-Ahlfors-David-regular set with $s \geq 1$. Then, there exists a point $x_{0} \in K$ such that
\begin{displaymath} \Pd K \cdot (K - x_{0}) = \Pd \{x_{1} \cdot (x_{2} - x_{0}) : x_{1},x_{2} \in K\} = 1. \end{displaymath}
\end{thm}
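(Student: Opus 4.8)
The plan is to run the argument behind Theorem~\ref{main} --- which I expect to reduce the distance set of an AD-regular planar set to orthogonal projections of (rescaled pieces of) $K$ --- in the present setting, where the reduction to projections is in fact more direct. Write $\Delta_{x_0}(K) := K \cdot (K - x_0) = \{x_1 \cdot (x_2 - x_0) : x_1, x_2 \in K\}$. Since $\Delta_{x_0}(K) \subset \R$, the bound $\Pd \Delta_{x_0}(K) \le 1$ is automatic, so it suffices to find one point $x_0 \in K$ with $\Pd \Delta_{x_0}(K) \ge 1$. The key observation is that for a \emph{fixed} $x_2 \in K \setminus \{x_0\}$ the map $x_1 \mapsto x_1 \cdot (x_2 - x_0)$ equals $|x_2 - x_0|$ times the orthogonal projection of $x_1$ onto the line through the origin in the direction $e := (x_2 - x_0)/|x_2 - x_0|$; writing $\pi_e$ for that projection (identified with a map to $\R$), this gives
\[
\Delta_{x_0}(K) \supseteq |x_2 - x_0| \cdot \pi_e(K), \qquad \text{hence} \qquad \Pd \Delta_{x_0}(K) \ge \Pd \pi_e(K),
\]
for every such pair $x_0, x_2 \in K$, by monotonicity and scaling-invariance of packing dimension. (This is the exact analogue of the mechanism in Theorem~\ref{main}: there, after restricting $K$ to a ball $B(x_2,\delta)$ and rescaling by $\delta^{-1}$, the pinned distance $x_1 \mapsto |x_1 - x_2|$ is, to first order, again an affine image of an orthogonal projection of the rescaled, still AD-regular, piece of $K$.) So it is enough to find $x_0, x_2 \in K$ with $\Pd \pi_e(K) = 1$ for $e = (x_2 - x_0)/|x_2-x_0|$.

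For the projection statement I would invoke the packing-dimension analogue of Marstrand's projection theorem due to Falconer and Howroyd: $\Pd \pi_e(K) = \operatorname{Dim}_1 K$ for Lebesgue-almost every $e \in S^1$, where $\operatorname{Dim}_1$ is the packing-dimension profile for projections to lines. The role of the hypotheses is precisely that $K$, being $s$-AD-regular with $s \ge 1$, carries its natural measure $\mu$ with $\mu(B(x,r)) \approx r^s$, so that the $d$-energy of $\mu$ is finite for every $d < s$, and in particular for every $d < 1$; this forces $\operatorname{Dim}_1 K = \min\{s, 1\} = 1$. Hence $\Pd \pi_e(K) = 1$ for almost every $e \in S^1$, and it remains to ensure that one such ``good'' direction $e$ is realised as $(x_2 - x_0)/|x_2 - x_0|$ for some $x_0, x_2 \in K$; equivalently, that the radial projection of $K$ from some point $x_0 \in K$ is not Lebesgue-null in $S^1$, so that it meets the co-null set of good directions.

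I expect this last point to be the main obstacle, handled by a case split as in Theorem~\ref{main}. If $K$ lies on a line, then $\Delta_{x_0}(K)$ reduces to a set of the form $c \cdot A(A - a_0)$ with $A \subset \R$ a positive-length $1$-AD-regular set; freezing $a_2$ with $a_2 - a_0 \ne 0$ exhibits a scaled copy of $A$ inside it, so it has positive Lebesgue measure and $\Pd = 1$ directly. If $K$ does not lie on a line and $s > 1$, the radial projection of $K$ from $\mu$-a.e. point $x_0 \in K$ has positive length by the classical radial projection theorem, and we are done. The genuinely delicate case is $s = 1$ with $K$ not contained in a line: here one needs a non-degeneracy statement for radial projections of $1$-AD-regular sets, or --- lacking that --- must iterate the whole argument over scales, using that the blow-ups $\delta^{-1}(K \cap B(z,\delta))$ are $1$-AD-regular with uniform constants to carry a single good projection direction down to all small scales and build, level by level, a $1$-AD-regular Cantor subset of $\Delta_{x_0}(K)$. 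The subtle step there is guaranteeing enough branching of the dot-product values at each scale --- which is exactly the point at which one exploits that only \emph{packing} dimension, not Hausdorff dimension, is being claimed.
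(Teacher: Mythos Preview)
Your reduction to orthogonal projections is exactly the starting point the paper uses: $\Delta_{x_0}(K) \supset |x_e - x_0|\,\pi_e(K)$ for every $e \in R_{x_0}$, so the problem becomes one about projections in directions realised by $K$. After that, however, the two arguments diverge, and your route has a genuine gap in the case $s=1$.

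For $s>1$ your plan works and is arguably cleaner than the paper's. You do not need the detour through the Falconer--Howroyd packing profile $\operatorname{Dim}_1$: the energy bound you quote gives, via classical Marstrand, $\dim_{\mathrm H}\pi_e(K)=1$ for Lebesgue-a.e.\ $e$, hence $\Pd\pi_e(K)=1$ for a.e.\ $e$ simply because $\dim_{\mathrm H}\le\Pd$. (Your stated justification --- that finite $d$-energy forces $\operatorname{Dim}_1 K=1$ --- mixes up Hausdorff and packing information; the conclusion is right but for the simpler reason just given.) Since for $s>1$ the radial projection $R_{x_0}$ has positive length for $\mu$-a.e.\ $x_0$, it meets the co-null set of good directions, and you are done.

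The gap is at $s=1$. Here you only know that $R_{x_0}$ is \emph{dense} in $S^1$ (for $K$ purely $1$-unrectifiable, via \cite[Lemma 15.13]{Mat}); there is no reason it should have positive length, and a dense Lebesgue-null set of realised directions can miss the co-null set of Marstrand-good directions entirely. Your suggested fix --- iterate over scales and build a Cantor subset of $\Delta_{x_0}(K)$ --- is not a proof as stated: at each scale you would again need to select a good direction from a possibly null direction set, so the obstruction recurs rather than dissolves.

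The paper sidesteps this by \emph{not} asking for a single good direction. Instead it picks a finite $2^{-m}$-net $S_{2^m}\subset R_{x_0}$ (which only needs density), and bounds the \emph{average} $\sum_{e\in S_{2^m}} p_e\,H_n(\pi_{e\sharp}\mu)$ from below by $t$ via the multi-scale entropy inequality (Lemma~\ref{multiScale}) combined with an entropy form of Marstrand (Proposition~\ref{entropyMarstrand}, Corollary~\ref{entropyCor}). The point is that AD-regularity is inherited by the blow-ups $\mu^Q$, so the entropy-Marstrand bound applies uniformly at every scale; averaging over the net then absorbs the exceptional directions. This yields the box-dimension statement \eqref{form26}, and the upgrade from $\Bd$ to $\Pd$ is carried out by the Baire-category argument of Section~\ref{packingDimension}, again using only the net $S_{2^m}$. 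In short: where you need the direction set to have positive measure, the paper only needs it to be dense, and this is what makes the argument go through at $s=1$.
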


\begin{cor}\label{sumProduct} Assume that $\emptyset \neq A \subset \R$ is a compact Ahlfors-David regular set with $\Hd A \geq 1/2$. Then, there exist points $a_{1},a_{2} \in A$ such that
\begin{displaymath} \Pd [A(A - a_{1}) + A(A - a_{2})] = 1. \end{displaymath}
\end{cor}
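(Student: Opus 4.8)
The plan is to obtain Corollary \ref{sumProduct} as a direct consequence of Theorem \ref{dotProducts}, applied to a Cartesian product in the plane. First I would record the (routine) fact that if $A \subset \R$ is non-empty, compact, and $s$-Ahlfors-David regular with $s = \Hd A \geq 1/2$, then $K := A \times A \subset \R^{2}$ is non-empty, compact, and $2s$-Ahlfors-David regular, with $2s \geq 1$. Concretely, if $\mu$ is a measure on $A$ with $\mu(B(a,r)) \sim r^{s}$ for all $a \in A$ and $0 < r \leq \diam A$, then for any $(a,b) \in K$ the product measure satisfies $(\mu \times \mu)(B((a,b),r)) \sim \mu(B(a,r))\,\mu(B(b,r)) \sim r^{2s}$, using that Euclidean balls in $\R^{2}$ are sandwiched between coordinate squares $I \times J$ of comparable side length. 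Hence $K$ satisfies the hypotheses of Theorem \ref{dotProducts}.

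Applying Theorem \ref{dotProducts} to $K$ produces a point $x_{0} \in K$, which I write as $x_{0} = (a_{1},a_{2})$ with $a_{1},a_{2} \in A$, such that
\begin{displaymath} \Pd \{x_{1} \cdot (x_{2} - x_{0}) : x_{1},x_{2} \in K\} = 1. \end{displaymath}
The remaining step is the elementary identity
\begin{displaymath} \{x_{1} \cdot (x_{2} - x_{0}) : x_{1},x_{2} \in K\} = A(A - a_{1}) + A(A - a_{2}). \end{displaymath}
Indeed, writing $x_{1} = (u_{1},u_{2})$ and $x_{2} = (v_{1},v_{2})$ with $u_{i},v_{i} \in A$, one has $x_{1} \cdot (x_{2} - x_{0}) = u_{1}(v_{1} - a_{1}) + u_{2}(v_{2} - a_{2})$, and since the pairs $(u_{1},v_{1})$ and $(u_{2},v_{2})$ range independently over $A \times A$, the two displayed sets coincide. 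Combining the last two displays gives $\Pd[A(A - a_{1}) + A(A - a_{2})] = 1$, which is exactly the assertion of Corollary \ref{sumProduct}.

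I do not anticipate a genuine obstacle here: the corollary is essentially a translation of Theorem \ref{dotProducts}. The only points that deserve explicit mention are the stability of Ahlfors-David regularity under Cartesian products together with the arithmetic $s \geq 1/2 \Rightarrow 2s \geq 1$, and the set-theoretic identity above; both are immediate. Finally, since $A(A - a_{1}) + A(A - a_{2})$ is visibly contained in $AA + AA - AA - AA$ (each summand $u(v-a_{i})$ lies in $AA - AA$ because $a_{i} \in A$), monotonicity of packing dimension and the trivial upper bound $\Pd(\,\cdot\,) \leq 1$ in $\R$ yield $\Pd[AA + AA - AA - AA] = 1$ as well, giving the last claim of the abstract.
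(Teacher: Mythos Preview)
Your proof is correct and follows essentially the same route as the paper: form the product set $K=A\times A$, note it is $2s$-AD-regular with $2s\geq 1$, apply Theorem \ref{dotProducts}, and unwind the dot product as $A(A-a_{1})+A(A-a_{2})$. Your write-up is slightly more explicit about the product regularity and the set identity, and you add the observation about $AA+AA-AA-AA$ from the abstract, but the argument is the same.
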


Theorem \ref{main} is proved in Section \ref{mainProof}, after the necessary tools have been developed in Section \ref{preliminaries}. The small modifications needed to prove Theorem \ref{dotProducts} and Corollary \ref{sumProduct} are discussed in Section \ref{corollaries}. We conclude the introduction by defining the some basic concepts.

\begin{definition}[Packing and box dimensions] Above, $\Pd$ stands for packing dimension. To define it, we first write
\begin{displaymath} \Bd A := \limsup_{\delta \to 0} \frac{\log N(A,\delta)}{-\log \delta} \end{displaymath}
for bounded sets $A \subset \R^{d}$, where $N(A,\delta)$ is the least number of $\delta$-balls required to cover $A$. The dimension $\Bd A$ is commonly known as upper box dimension or upper Minkowski dimension.  Then, $\Pd$ is defined by
\begin{displaymath} \dim_{\textup{p}}(B) = \inf \left\{ \sup \Bd F_{i} : B \subset \bigcup_{i} F_{i}, \: F_{i} \text{ closed}\right\}. \end{displaymath} 
Since $\Bd A = \Bd \bar{A}$ for all bounded sets $A$, one has $\Pd A \leq \Bd A$ for all bounded sets $A$. The converse need not hold even for compact sets, since
\begin{displaymath} \Pd \{0,1,\tfrac{1}{2},\tfrac{1}{3},\ldots\} = 0 < 1/2 = \Bd \{0,1,\tfrac{1}{2},\tfrac{1}{3},\ldots\}. \end{displaymath}
\end{definition}

\begin{definition}[AD-regular sets and measures] A Borel measure $\mu$ on $\R^{d}$ is said to be $(s,A)$-Ahlfors-David regular -- or $(s,A)$-AD-regular in short -- if
\begin{displaymath} \frac{r^{s}}{A} \leq \mu(B(x,r)) \leq Ar^{s} \end{displaymath}
for all $x \in \spt \mu$ and $0 < r \leq \diam(\spt \mu)$. An $\calH^{s}$-measurable set $K \subset [0,1]$ is called $(s,A)$-AD regular, if $0 < \calH^{s}(K) < \infty$, and the restriction $\mu := \calH^{s}|_{K}$ of $\calH^{s}$ to $K$ is $(s,A)$-AD regular.
\end{definition}

\section{Acknowledgements} I am grateful to Alex Iosevich for pointing out that Theorem \ref{dotProducts} and Corollary \ref{sumProduct} follow from the proof strategy employed in Theorem \ref{main}. In fact, an earlier version of the manuscript claimed even stronger versions of Theorem \ref{dotProducts} and Corollary \ref{sumProduct}, but Pablo Shmerkin pointed out an error in the argument; I'm very thankful for his careful reading! 

\section{Preliminaries on entropy and projections}\label{preliminaries} Many of the arguments in this section are repeated from \cite{O}, where, further, the discussion closely followed that of M. Hochman's paper \cite{Ho}. 

\begin{definition}[Measures and their blow-ups in $\R^{d}$] Let $\calP(\Omega)$ stand for the space of Borel probability measures on $\Omega$. In what follows, $\Omega$ will be $\R^{d}$, or a cube in $\R^{d}$, and $d \in \{1,2\}$. If $Q = r[0,1)^{d} + a$ is a cube in $\R^{d}$, let $T_{Q}(x) := (x - a)/r$ be the unique homothety taking $Q$ to $[0,1)^{d}$. Given a measure $\mu \in \calP(\R^{d})$ and a cube $Q$ as above, with $\mu(Q) > 0$, define the measures
\begin{displaymath} \mu_{Q} := \frac{1}{\mu(Q)} \mu|_{Q} \quad \text{and} \quad \mu^{Q} := T_{Q\sharp}(\mu_{Q}), \end{displaymath} 
where $\mu|_{Q}$ is the restriction of $\mu$ to $Q$, and $T_{Q\sharp}$ is the push-forward under $T_{Q}$. So, $\mu^{Q}$ is a "blow-up" of $\mu_{Q}$ into $[0,1)^{d}$.
\end{definition}

\begin{definition}[Entropy]\label{entropy} Let $\mu \in \calP(\Omega)$, and let $\calF$ be a countable $\mu$-measurable partition of $\Omega$. Set
\begin{displaymath} H(\mu,\calF) := -\sum_{F \in \calF} \mu(F) \log \mu(F), \end{displaymath}
where the convention $0 \cdot \log 0 := 0$ is used. If $\calE$ and $\calF$ are two $\mu$-measurable partitions, one also defines the conditional entropy
\begin{displaymath} H(\mu,\calE | \calF) := \sum_{F \in \calF} \mu(F)  \cdot H(\mu_{F},\calE), \end{displaymath}
where $\mu_{F} := \mu|_{F}/\mu(F)$, if $\mu(F) > 0$. 
 \end{definition} 
 
 The notion of conditional entropy is particularly useful, when $\calE$ \emph{refines} $\calF$, which means that every set in $\calE$ is contained in a (unique) set in $\calF$:
 \begin{proposition}[Conditional entropy formula]\label{CEF} Assume that $\calE,\calF$ are partitions as in Definition \ref{entropy}, and $\calE$ refines $\calF$. Then
 \begin{displaymath} H(\mu,\calE|\calF) = H(\mu,\calE) - H(\mu,\calF). \end{displaymath}
 In particular, $H(\mu,\calE) \geq H(\mu,\calF)$.
 \end{proposition}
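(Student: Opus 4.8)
The plan is to unwind both sides of the claimed identity directly from Definition \ref{entropy}, using the refinement hypothesis to reorganise the sum defining $H(\mu,\calE)$ according to which cell of $\calF$ each cell of $\calE$ lies in. To set up, note that since $\calE$ refines $\calF$, for each $F \in \calF$ the subfamily $\calE_{F} := \{E \in \calE : E \subseteq F\}$ is itself a partition of $F$, and every $E \in \calE$ belongs to exactly one $\calE_{F}$. Hence, for $F$ with $\mu(F) > 0$ and $E \in \calE$, one has $\mu_{F}(E) = \mu(E)/\mu(F)$ if $E \in \calE_{F}$ and $\mu_{F}(E) = 0$ otherwise; cells of $\mu$-measure zero contribute nothing to either side and may be discarded.

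Plugging this into the definition of $H(\mu_{F},\calE)$ and clearing denominators gives, for each $F$ with $\mu(F) > 0$,
\begin{displaymath} \mu(F)\,H(\mu_{F},\calE) = -\sum_{E \in \calE_{F}} \mu(E)\log\frac{\mu(E)}{\mu(F)} = -\sum_{E \in \calE_{F}}\mu(E)\log\mu(E) + \log\mu(F)\sum_{E \in \calE_{F}}\mu(E). \end{displaymath}
Summing this over $F \in \calF$, and using $\sum_{E \in \calE_{F}}\mu(E) = \mu(F)$ together with the fact that the families $\{\calE_{F}\}_{F \in \calF}$ partition $\calE$, the right-hand side collapses:
\begin{displaymath} H(\mu,\calE|\calF) = \sum_{F \in \calF}\mu(F)\,H(\mu_{F},\calE) = -\sum_{E \in \calE}\mu(E)\log\mu(E) + \sum_{F \in \calF}\mu(F)\log\mu(F) = H(\mu,\calE) - H(\mu,\calF), \end{displaymath}
which is the asserted formula. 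For the "in particular" clause, each summand $-p\log p$ with $p \in [0,1]$ is non-negative, so $H(\mu_{F},\calE) \geq 0$ for every $F$, whence $H(\mu,\calE|\calF) \geq 0$ and therefore $H(\mu,\calE) \geq H(\mu,\calF)$.

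The only point requiring any care — and the place I expect to be the main obstacle — is the legitimacy of the rearrangements, since the partitions are merely countable and the series are a priori infinite. This is harmless here: for $E \in \calE_{F}$ one has $\mu(E) \leq \mu(F)$, so every term $-\mu(E)\log(\mu(E)/\mu(F))$ is non-negative and the double sum may be reorganised freely by Tonelli's theorem; the splitting into two separate sums is then valid as soon as $H(\mu,\calF) < \infty$, which holds automatically for the finite dyadic partitions used throughout this paper. In the general case one proves the identity first for suitable finite coarsenings and passes to the limit by monotone convergence. The algebraic heart of the argument is the one-line manipulation $\log(\mu(E)/\mu(F)) = \log\mu(E) - \log\mu(F)$, so no further difficulties arise.
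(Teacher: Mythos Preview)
Your proof is correct and follows essentially the same route as the paper's: both unwind the definition of $H(\mu,\calE|\calF)$, use that $\mu_{F}(E) = \mu(E)/\mu(F)$ for $E \subset F$, split $\log(\mu(E)/\mu(F))$ into $\log\mu(E) - \log\mu(F)$, and regroup. You are in fact slightly more thorough, since you justify the ``in particular'' clause and address the rearrangement of the countable sums, neither of which the paper comments on.
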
 
 
 \begin{proof} For $F \in \calF$, let $\calE(F) := \{E \in \calE : E \subset F\}$. A direct computation gives
 \begin{align*} H(\mu,\calE | \calF) & = - \sum_{F \in \calF} \mu(F) \cdot \sum_{E \in \calE} \mu_{F}(E) \log \mu_{F}(E)\\
 & = - \sum_{F \in \calF} \sum_{E \in \calE(F)} \mu(E) \log \frac{\mu(E)}{\mu(F)}\\
 & = - \left( \sum_{E \in \calE} \mu(E) \log \mu (E) - \sum_{F \in \calF} \log \mu(F) \sum_{E \in \calE(F)} \mu(E) \right)\\
 & = H(\mu,\calE) + \sum_{F \in \calF} \mu(F) \log \mu(F) = H(\mu,\calE) - H(\mu,\calF), \end{align*}
as claimed.  \end{proof} 

The partitions $\calE,\calF$ used below will be the dyadic partitions of $\R^{d}$: $\calE,\calF = \calD_{n}$. The lemma below contains three more useful and well-known -- or easily verified -- properties of entropy. The items are selected from \cite[Lemma 3.1]{Ho} and \cite[Lemma 3.2]{Ho}.
\begin{lemma}\label{factsOfLife} Let $\calE, \calF$ be countable $\mu$-measurable partitions of $\Omega$.
\begin{itemize} 
\item[(i)] The functions $\mu \mapsto H(\mu,\calE)$ and $\mu \mapsto H(\mu,\calE | \calF)$ are concave.
\item[(ii)] If $\spt \mu \subset B(0,R)$, and $f,g \colon B(0,R) \to \R$ are functions so that $|f(x) - g(x)| \leq R2^{-n}$ for $x \in B(0,R)$, then 
\begin{displaymath} |H(f_{\sharp}\mu,\calD_{n}) - H(g_{\sharp}\mu,\calD_{n})| \leq C, \end{displaymath}
where $C > 0$ only depends on $R$.
\item[(iii)] If every set in $\calE$ meets at most $C \geq 1$ sets in $\calF$ and vice versa, then 
\begin{displaymath} |H(\mu,\calE) - H(\mu,\calF)| \lesssim_{C} 1. \end{displaymath}
\end{itemize}
\end{lemma}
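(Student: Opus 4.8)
The plan is to prove item (iii) directly from the conditional entropy formula (Proposition \ref{CEF}), to deduce item (ii) from (iii) by passing to pull-back partitions, and to obtain item (i) from the concavity of $t \mapsto -t\log t$. Starting with (iii), I would introduce the common refinement $\calE \vee \calF := \{E \cap F : E \in \calE,\ F \in \calF,\ E \cap F \neq \emptyset\}$, which refines both $\calE$ and $\calF$. Applying Proposition \ref{CEF} twice gives
\[ H(\mu,\calE) - H(\mu,\calF) = H(\mu, \calE \vee \calF \mid \calF) - H(\mu, \calE \vee \calF \mid \calE), \]
and, since conditional entropy is non-negative, it suffices to bound each term on the right by $\log C$. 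Fix $F \in \calF$ with $\mu(F) > 0$: the atoms of $\calE \vee \calF$ contained in $F$ are exactly the non-empty sets $E \cap F$ with $E \in \calE$, and by hypothesis there are at most $C$ of them, so $H(\mu_{F}, \calE \vee \calF) \leq \log C$, because a probability measure has entropy at most $\log C$ with respect to any partition into at most $C$ pieces. Averaging over $F$ yields $H(\mu, \calE \vee \calF \mid \calF) \leq \log C$, and the symmetric hypothesis gives $H(\mu, \calE \vee \calF \mid \calE) \leq \log C$ in the same way; this proves (iii).

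For (ii), write $\calD_{n}^{f} := \{f^{-1}(I) : I \in \calD_{n}\}$ for the partition pulled back from $\calD_{n}$ under $f$, and likewise $\calD_{n}^{g}$. Unravelling the definitions of entropy and of push-forward gives $H(f_{\sharp}\mu, \calD_{n}) = H(\mu, \calD_{n}^{f})$ and $H(g_{\sharp}\mu, \calD_{n}) = H(\mu, \calD_{n}^{g})$. If $x \in f^{-1}(I) \cap g^{-1}(J)$ for some $I, J \in \calD_{n}$, then $\dist(I,J) \leq |f(x) - g(x)| \leq R 2^{-n}$, and hence $J$ is one of at most $2 \lceil R \rceil + 3$ intervals of $\calD_{n}$; so every atom of $\calD_{n}^{f}$ meets at most $O(R)$ atoms of $\calD_{n}^{g}$, and vice versa by symmetry. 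Item (iii) then yields $|H(\mu, \calD_{n}^{f}) - H(\mu, \calD_{n}^{g})| \lesssim_{R} 1$, which is (ii).

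Finally, for (i): for a fixed partition $\calE$ one has $H(\mu, \calE) = \sum_{E \in \calE} \phi(\mu(E))$ with $\phi(t) := -t \log t$ concave and $\mu \mapsto \mu(E)$ affine, so $H(\cdot, \calE)$ is a sum of concave functions of affine maps, hence concave. For the conditional entropy, the identity
\[ H(\mu, \calE \mid \calF) = \sum_{E \in \calE,\, F \in \calF} \mu(F)\, \phi\!\left( \frac{\mu(E \cap F)}{\mu(F)} \right) = \sum_{E,\, F} \mu(E \cap F) \log \frac{\mu(F)}{\mu(E \cap F)} \]
exhibits $H(\cdot, \calE \mid \calF)$ as a sum of terms of the form $(x,y) \mapsto x \log (y/x)$, which is the perspective of the concave function $\log$ and so jointly concave on $\{x, y > 0\}$; precomposing with the affine maps $\mu \mapsto (\mu(E \cap F), \mu(F))$ and summing preserves concavity (atoms with $\mu(F) = 0$ contribute nothing, by the convention $0 \log 0 = 0$). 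I expect the only genuine content here to be item (iii) together with the observation that (ii) reduces to it; the remaining steps are short computations, the one recurring nuisance being the bookkeeping around degenerate atoms, which is harmless but should be checked whenever Proposition \ref{CEF} is invoked.
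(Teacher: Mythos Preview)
Your proof is correct. The paper itself does not prove this lemma: it merely states that the items are ``well-known --- or easily verified'' and cites \cite[Lemmas 3.1 and 3.2]{Ho}, so there is no in-paper argument to compare against. Your approach --- proving (iii) via the common refinement $\calE \vee \calF$ and Proposition~\ref{CEF}, reducing (ii) to (iii) by pulling back $\calD_{n}$ along $f$ and $g$, and handling (i) via concavity of $t \mapsto -t\log t$ and of the perspective $(x,y) \mapsto x\log(y/x)$ --- is exactly the standard route and matches what one finds in Hochman's paper. One cosmetic remark: in (ii) you implicitly use that $f$ and $g$ are $\mu$-measurable so that the pull-back partitions make sense; this is automatic in every application in the paper (the functions are continuous), but strictly speaking the lemma as stated only says ``functions''.
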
 

Let $\calD_{n}$ be the family of dyadic cubes of side-length $2^{-n}$ in $\R^{d}$ (the notation will be used in both $\R^{2}$ and $\R$). For $n \in \N$, write $H_{n}$ for the \emph{normalised scale $2^{-n}$-entropy}
\begin{displaymath} H_{n}(\mu) := \frac{1}{\log 2^{n}} \cdot H(\mu,\calD_{n}) = \sum_{Q \in \calD_{n}} \mu(Q) \cdot \left(\frac{\log \mu(Q)}{\log 2^{-n}} \right). \end{displaymath}
Now, all the definitions and tools are in place to state and prove the key auxiliary result from Hochman's paper, namely \cite[Lemma 3.5]{Ho}, in slightly modified form:
\begin{lemma}\label{multiScale} Let $\mu \in \calP([0,1]^{2})$ and $m,n \in \N$ with $m < n$. Then, for any continuous mapping $f \colon [0,1]^{2} \to \R$, 
\begin{displaymath} H_{n}(f_{\sharp}\mu) \geq \frac{1}{n} \sum_{k = 0}^{\floor*{n/m} - 1} \sum_{Q \in \calD_{km}} \mu(Q) \cdot H(f_{\sharp}\mu_{Q}, \calD_{(k + 1)m} | \calD_{km}). \end{displaymath}
The sum $Q \in \calD_{km}$ only runs over those $Q$ with $\mu(Q) > 0$.
\end{lemma}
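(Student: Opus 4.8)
The plan is to deduce this directly from the chain rule for entropy (Proposition~\ref{CEF}) and the concavity of conditional entropy (Lemma~\ref{factsOfLife}(i)); no geometric property of $f$ will be used, only that $\nu := f_{\sharp}\mu$ is a Borel probability measure on $\R$ with bounded support, which holds because $f$ is continuous on the compact set $[0,1]^{2}$. I would set $N := \floor*{n/m}$, so that $Nm \leq n$, and work with the unnormalised entropies $H(\nu,\calD_{\ell})$, dividing by $\log 2^{n}$ only at the end.

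The first half of the argument takes place downstairs, in $\R$. Since $\calD_{n}$ refines $\calD_{Nm}$, Proposition~\ref{CEF} gives $H(\nu,\calD_{n}) \geq H(\nu,\calD_{Nm})$. For each $0 \leq k \leq N - 1$ the partition $\calD_{(k+1)m}$ refines $\calD_{km}$, so Proposition~\ref{CEF} also gives $H(\nu,\calD_{(k+1)m}) = H(\nu,\calD_{km}) + H(\nu,\calD_{(k+1)m} \mid \calD_{km})$. Summing over $k = 0,\ldots,N-1$ telescopes to
\[ H(\nu,\calD_{Nm}) = H(\nu,\calD_{0}) + \sum_{k=0}^{N-1} H(\nu,\calD_{(k+1)m} \mid \calD_{km}) \geq \sum_{k=0}^{N-1} H(\nu,\calD_{(k+1)m} \mid \calD_{km}), \]
where I discard the non-negative term $H(\nu,\calD_{0})$.

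The second half localises each conditional entropy to scale $2^{-km}$ upstairs, in $\R^{2}$. Fixing $k$ and letting $\calD_{km}$ now denote the (finitely many) dyadic cubes of side $2^{-km}$ meeting $[0,1]^{2}$, the restrictions $\mu|_{Q}$ over those $Q$ with $\mu(Q) > 0$ partition $\mu$, so $\mu = \sum_{Q} \mu(Q)\mu_{Q}$ and hence $\nu = f_{\sharp}\mu = \sum_{Q} \mu(Q)\, f_{\sharp}\mu_{Q}$, a finite convex combination of probability measures on $\R$. Applying the concavity of $\rho \mapsto H(\rho,\calD_{(k+1)m} \mid \calD_{km})$ from Lemma~\ref{factsOfLife}(i) gives
\[ H(\nu,\calD_{(k+1)m} \mid \calD_{km}) \geq \sum_{Q \in \calD_{km}} \mu(Q)\, H(f_{\sharp}\mu_{Q},\calD_{(k+1)m} \mid \calD_{km}). \]
Summing this over $k$, combining with the telescoping bound from the previous paragraph, and dividing by $\log 2^{n} = n\log 2$ yields $H_{n}(f_{\sharp}\mu) \geq \tfrac{1}{n\log 2}\sum_{k=0}^{N-1}\sum_{Q}\mu(Q)\, H(f_{\sharp}\mu_{Q},\calD_{(k+1)m} \mid \calD_{km})$; since every conditional entropy here is non-negative and $\log 2 \leq 1$, the constant $\tfrac{1}{n\log 2}$ may be weakened to $\tfrac{1}{n}$, which is the asserted bound (and the sum over $Q$ only sees cubes with $\mu(Q) > 0$).

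I do not expect a genuine obstacle: the statement is essentially a repackaging of \cite[Lemma 3.5]{Ho}. The points needing a little care are (a) being consistent about whether a given symbol $\calD_{\ell}$ refers to the dyadic lattice of $\R$, where $\nu$ lives, or of $\R^{2}$, where $\mu$ lives; (b) checking that all entropies appearing are finite, which follows since $\nu$ has bounded support and $[0,1]^{2}$ meets only finitely many cubes of $\calD_{km}$, so all the partitions involved are finite and the subtraction in the chain rule is harmless; and (c) the minor bookkeeping around the normalisation, which in fact produces the constant $1/\log 2^{n}$, slightly better than the $1/n$ claimed.
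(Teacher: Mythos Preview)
Your proposal is correct and follows essentially the same route as the paper: telescope $H(f_{\sharp}\mu,\calD_{Nm})$ via Proposition~\ref{CEF}, then apply concavity of conditional entropy to the convex combination $f_{\sharp}\mu = \sum_{Q}\mu(Q)\,f_{\sharp}\mu_{Q}$. Your handling of the normalisation in (c) is in fact slightly more careful than the paper's, which simply says ``dividing by $n$ completes the proof'' without commenting on the factor $\log 2$.
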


\begin{proof} Write $n = k_{0}m + r$, where $0 \leq r < m$, and $k_{0} = \floor*{n/m}$. Then
\begin{align*} H(f_{\sharp}\mu,\calD_{n}) \geq H(f_{\sharp}\mu,\calD_{k_{0}m}) & = \sum_{k = 0}^{k_{0} - 1} H(f_{\sharp}\mu, \calD_{(k + 1)m} | \calD_{km}) + H(f_{\sharp}\mu,\calD_{0})\\
& \geq \sum_{k = 0}^{k_{0} - 1} H(f_{\sharp}\mu, \calD_{(k + 1)m} | \calD_{km}) \end{align*} 
by repeated application of Proposition \ref{CEF}. Next, since $f_{\sharp} \colon \calP([0,1]^{2}) \to \calP(\R)$ is linear (even if $f$ is not), one has
\begin{align*} f_{\sharp}\mu & = f_{\sharp} \left(\sum_{Q \in \calD_{km}} \mu|_{Q} \right) = \sum_{Q \in \calD_{km}} f_{\sharp}(\mu|_{Q}) = \sum_{Q \in \calD_{km}} \mu(Q) \cdot f_{\sharp} \mu_{Q}, \end{align*}
so, by Jensen's inequality and the concavity of (conditional) entropy,
\begin{displaymath} H(f_{\sharp}\mu,\calD_{(k + 1)m} | \calD_{km}) \geq \sum_{Q \in \calD_{km}} \mu(Q) \cdot H(f_{\sharp}\mu_{Q}, \calD_{(k + 1)m} | \calD_{km}). \end{displaymath}
Dividing by $n$ completes the proof. \end{proof} 

\begin{remark} In case $f$ is linear, say $f = \pi_{e}$ for some $e \in S^{1}$, where $\pi_{e}$ is the orthogonal projection $\pi_{e}(x) = x \cdot e$, the lemma can be taken a step further. Observe that
\begin{displaymath} H(\pi_{e\sharp}\mu_{Q},\calD_{(k + 1)m}|\calD_{km}) = H(\pi_{e\sharp}\mu^{Q}, \calD_{m} | \calD_{0}) = H(\pi_{e\sharp}\mu^{Q},\calD_{m}) - H(\pi_{e\sharp}\mu^{Q}, \calD_{0}), \end{displaymath}
by the linearity of $\pi_{e}$ and Proposition \ref{CEF}. Here $H(\pi_{e\sharp}\mu^{Q},\calD_{0}) \leq 3$, because $\pi_{e\sharp}\mu^{Q}$ is supported in an interval of length $\sqrt{2}$. So,
\begin{equation}\label{form17} H(\pi_{e\sharp}\mu_{Q},\calD_{(k + 1)m}|\calD_{km}) \geq m \cdot H_{m}(\pi_{e\sharp}\mu^{Q}) - 3. \end{equation}
\end{remark}

\subsection{An entropy version of Marstrand's theorem}

To estimate the quantity on the right hand side of \eqref{form17} from below, one needs the following Marstrand type projection result:

\begin{proposition}\label{entropyMarstrand} Assume that $\mu \in \calP([0,1]^{2})$ satisfies the linear growth condition $\mu(B(x,r)) \leq Ar$ for $x \in \R^{2}$, $r > 0$ and some $A \geq 1$. Then
\begin{displaymath} \int_{S^{1}} H_{m}(\pi_{e\sharp}\mu) \, d\sigma(e) \geq s - ACm \cdot 2^{(s - 1)m}, \qquad 0 < s < 1, \end{displaymath} 
where $\sigma$ is the unit-normalised length measure on $S^{1}$, and $C > 0$ is an absolute constant.
\end{proposition}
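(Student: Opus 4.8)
The plan is to bound, for a single measure $\nu \in \calP(\R)$, the normalised entropy $H_m(\nu)$ from below by $s$ minus a multiple of the ``collision sum'' $\sum_{Q \in \calD_m} \nu(Q)^2$, and then to average that collision sum over $e \in S^1$, where the linear growth of $\mu$ forces it to be small. For the first, single-scale step, fix $0 < s < 1$ and split $\calD_m$ into the cubes with $\nu(Q) \leq 2^{-sm}$ and those with $\nu(Q) > 2^{-sm}$; on the first family $\log(1/\nu(Q)) \geq sm\log 2$ and on the second $\log(1/\nu(Q)) \geq 0$, so
\begin{displaymath} H(\nu,\calD_m) \geq sm\log 2 \cdot \nu\Bigl(\bigcup\{Q \in \calD_m : \nu(Q) \leq 2^{-sm}\}\Bigr) = sm\log 2 \cdot \Bigl(1 - \sum_{Q : \nu(Q) > 2^{-sm}} \nu(Q)\Bigr). \end{displaymath}
By Chebyshev, $\sum_{Q : \nu(Q) > 2^{-sm}} \nu(Q) \leq 2^{sm} \sum_{Q \in \calD_m} \nu(Q)^2$, so dividing by $m\log 2$ gives the clean bound $H_m(\nu) \geq s - s\, 2^{sm} \sum_{Q \in \calD_m} \nu(Q)^2$.

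Next I would average over directions. Since a dyadic interval $Q \in \calD_m$ containing $t$ satisfies $Q \subset B(t,2^{-m})$, one has $\sum_{Q \in \calD_m} \nu(Q)^2 \leq \int \nu(B(t,2^{-m}))\, d\nu(t)$, and for $\nu = \pi_{e\sharp}\mu$ this equals $(\mu \times \mu)\{(x,y) : |(x-y) \cdot e| \leq 2^{-m}\}$. Applying the Step-1 bound with $\nu = \pi_{e\sharp}\mu$, integrating over $e$ and using Fubini,
\begin{displaymath} \int_{S^1} H_m(\pi_{e\sharp}\mu)\, d\sigma(e) \geq s - s\, 2^{sm} \int\!\!\int \sigma\bigl(\{e : |(x-y) \cdot e| \leq 2^{-m}\}\bigr)\, d\mu(x)\, d\mu(y). \end{displaymath}
The genuine ``Marstrand'' input is now the elementary estimate $\sigma(\{e : |z \cdot e| \leq t\}) \leq \min\{1, t/|z|\}$ for $z \neq 0$ (rescale to $|z| = 1$ and use $\arcsin u \leq \tfrac{\pi}{2}u$), which reduces the double integral to $\int\!\!\int \min\{1, 2^{-m}/|x-y|\}\, d\mu(x)\, d\mu(y)$.

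Finally I would estimate this last integral using the linear growth $\mu(B(x,r)) \leq Ar$: for fixed $x$, the contribution of $\{|x-y| \leq 2^{-m}\}$ is $\mu(B(x,2^{-m})) \leq A 2^{-m}$, while $\{2^{-m} < |x-y| \leq \sqrt 2\}$ decomposes into $O(m)$ dyadic annuli, each contributing $\lesssim A 2^{-m}$ by linear growth; integrating in $x$ gives a total of at most $CAm\, 2^{-m}$ with $C$ absolute. Substituting into the previous display yields $\int_{S^1} H_m(\pi_{e\sharp}\mu)\, d\sigma \geq s - sC A m\, 2^{(s-1)m} \geq s - CAm\, 2^{(s-1)m}$, which is the assertion. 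Everything here is bookkeeping except the geometric bound on $\sigma(\{e : |z \cdot e| \leq t\})$, which carries the projection content; the one point to be careful about is that the enlargement factor $2^{sm}$ from the entropy step and the gain $2^{-m}$ from the energy step must combine into exactly the exponent $(s-1)m$, together with the stated linear-in-$m$ loss.
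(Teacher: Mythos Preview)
Your proof is correct and follows the same overall skeleton as the paper's: bound $H_m(\nu)$ from below by $s$ minus (a multiple of) the collision sum $\sum_Q \nu(Q)^2$ via a good/bad split at the threshold $2^{-sm}$, then show that the average over $e \in S^1$ of the collision sum for $\nu = \pi_{e\sharp}\mu$ is $\lesssim Am\,2^{-m}$. The paper establishes the latter by a Fourier/Plancherel argument, passing through $\int_{S^1}\|\pi_{e\sharp}(\mu \ast \psi_m)\|_2^2\,d\sigma \sim I_1(\mu \ast \psi_m)$ and then bounding the truncated $1$-energy by $Am$ using linear growth. You instead replace $\sum_Q (\pi_{e\sharp}\mu(Q))^2$ by $(\mu\times\mu)\{|(x-y)\cdot e|\leq 2^{-m}\}$, apply Fubini, and invoke the elementary arc-length bound $\sigma(\{e : |z\cdot e|\leq t\}) \leq \min\{1,t/|z|\}$ together with a dyadic-shell decomposition. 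Your route is more self-contained (no mollification, no Plancherel) and makes the ``Marstrand'' content---the transversality of the family $\{\pi_e\}$---completely explicit; the paper's route has the advantage of plugging directly into the well-known identity $\int_{S^1}\|\pi_{e\sharp}\nu\|_2^2\,d\sigma \sim I_1(\nu)$, which generalises cleanly to other settings. Either way the estimate is the same.
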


\begin{proof} Fix $m \in \N$. It follows from the linear growth condition for $\mu$ that
\begin{equation}\label{form12} \int_{S^{1}} 2^{m} \sum_{Q \in \calD_{m}} [\pi_{e\sharp}\mu(Q)]^{2} \, d\sigma(e) \lesssim Am. \end{equation} 
This is standard, so I only sketch the details: observe that for any $\nu \in \calP([0,1]^{2})$
\begin{align*} \int_{S^{1}} \|\pi_{e\sharp}\nu\|_{2}^{2} \, d\sigma(e) & = \int_{S^{1}} \int_{\R} |\hat{\nu}(te)|^{2} \, dt \, d\sigma(e)\\
& \sim \int_{\R^{2}} |\hat{\nu}(\xi)|^{2}|\xi|^{-1} \, d\xi \sim \iint \frac{d\nu x \, d\nu y}{|x - y|} =: I_{1}(\nu). \end{align*} 
Apply this with $\nu := \mu \ast \psi_{m}$, where $\psi_{m}(x) := 2^{2m}\psi(2^{m}x)$ and $\psi$ is a radial bump function with $\chi_{B(0,5)} \leq \psi \leq \chi_{B(0,10)}$. Using the linear growth condition for $\mu$, it is easy to verify that $I_{1}(\mu \ast \psi_{m}) \lesssim Am$, for $A,m \geq 1$. Further, since $\psi$ is radial, the projection $\pi_{e\sharp}(\mu \ast \psi_{m})$ has the form $(\pi_{e\sharp}\mu) \ast \phi_{m}$, where $\phi_{m}$ is a bump in $\R$ at scale $2^{-m}$, independent of $e$. Finally, the left hand side of \eqref{form12} is controlled by an absolute constant times $\|(\pi_{e\sharp}\mu) \ast \phi_{m}\|_{2}^{2}$. The inequality now follows by combining all the observations.

Let
\begin{displaymath} C_{e} := 2^{m} \sum_{Q \in \calD_{m}} [\pi_{e\sharp}\mu(Q)]^{2}. \end{displaymath}
Then, for $s < 1$ fixed,
\begin{displaymath} \pi_{e\sharp}\mu\left( \bigcup \left\{Q \in \calD_{m} : \pi_{e\sharp}\mu(Q) \geq 2^{-ms} \right\} \right) \leq C_{e}2^{(s - 1)m}, \end{displaymath}
and so
\begin{equation}\label{form13} \int_{S^{1}} \pi_{e\sharp}\mu\left( \bigcup \left\{Q \in \calD_{m} : \frac{\log \pi_{e\sharp}\mu(Q)}{\log 2^{-m}} \leq s \right\} \right) \, d\sigma(e) \lesssim Am \cdot 2^{(s - 1)m}. \end{equation}
Inspired by \eqref{form13}, let
\begin{displaymath} \calD_{m}^{e-\textup{bad}} := \left\{Q \in \calD_{m} : \frac{\log \pi_{e\sharp}\mu(Q)}{\log 2^{-m}} \leq s \right\}, \end{displaymath}
and denote by $\beta_{e}$ the $\pi_{e\sharp}\mu$-measure of the $e$-bad intervals. Then,
\begin{align*} \int_{S^{1}} H_{m}(\pi_{e\sharp}\mu) \, d\sigma(e) & \geq \int_{S^{1}} \sum_{Q \in \calD_{m} \setminus \calD_{m}^{e-bad}} \pi_{e\sharp}\mu(Q) \left(\frac{\log \pi_{e\sharp}\mu(Q)}{\log 2^{-m}} \right)\\
& \geq \int_{S^{1}} s(1 - \beta_{e}) \, d\sigma(e) \geq s - ACm \cdot 2^{(s - 1)m}, \end{align*} 
as claimed. \end{proof}

\begin{cor}\label{entropyCor} Let $\mu$ be as in Proposition \ref{entropyMarstrand}, and let $S_{2^{m}}$ be a collection of vectors with $|S_{2^{m}}| \sim 2^{m}$ such that every vector $e \in S^{1}$ is at distance $\lesssim 2^{-m}$ from one of the vectors in $S_{2^{m}}$. Then,
\begin{displaymath} \sum_{e \in S_{2^{m}}} p_{e} \cdot H_{m}(\pi_{e\sharp}\mu) \geq s - AC(m \cdot 2^{(s - 1)m} + 1/m), \quad 0 < s < 1, \end{displaymath}
where $p_{e} \sim 2^{-m}$ depends only on $S_{2^{m}}$, and the $C \geq 1$ only depends on the constants behind the $\sim$ and $\lesssim$ notation in the hypothesis. 
\end{cor}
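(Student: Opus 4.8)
The plan is to deduce Corollary \ref{entropyCor} from Proposition \ref{entropyMarstrand} by a routine discretisation of the integral over $S^{1}$, the only point requiring real care being a stability estimate for the map $e \mapsto H_{m}(\pi_{e\sharp}\mu)$ at scale $2^{-m}$. Write $\delta := 2^{-m}$, and let $C_{0} \geq 1$ absorb the implicit constants in the hypothesis, so that $|S_{2^{m}}| \leq C_{0}2^{m}$ and every $e' \in S^{1}$ has $\dist(e',S_{2^{m}}) \leq C_{0}\delta$. Since in applications one chooses $S_{2^{m}}$ to be a maximal $\delta$-separated subset of such a net — and a general net can be thinned to one, still $\lesssim\delta$-dense and of cardinality $\sim 2^{m}$ — I may assume $S_{2^{m}}$ is $\delta$-separated. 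Assigning to each $e' \in S^{1}$ a closest point of $S_{2^{m}}$ (ties resolved by any fixed measurable rule) yields a Borel partition $S^{1} = \bigcup_{e \in S_{2^{m}}} I_{e}$ with $e \in I_{e} \subset B(e,C_{0}\delta)$, while $\delta$-separation forces $B(e,c\delta) \cap S^{1} \subset I_{e}$ for a suitable absolute $c > 0$. Hence $p_{e} := \sigma(I_{e})$ satisfies $p_{e} \sim \delta = 2^{-m}$ and $\sum_{e \in S_{2^{m}}} p_{e} = 1$, and these weights depend only on $S_{2^{m}}$.

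The heart of the matter is to show
\[ |H_{m}(\pi_{e\sharp}\mu) - H_{m}(\pi_{e'\sharp}\mu)| \lesssim_{C_{0}} \frac{1}{m} \quad \text{for all } e, e' \in S^{1} \text{ with } |e - e'| \leq C_{0}\delta. \]
First I would note that $H_{m}(\pi_{e\sharp}\mu)$ is finite and nonnegative, since $\pi_{e\sharp}\mu$ is a probability measure carried by an interval of length $\leq \sqrt{2}$, so all the manipulations below are legitimate; I may also assume $m \geq \lceil\log_{2} C_{0}\rceil$, as for smaller $m$ one has $1/m \gtrsim_{C_{0}} 1$ while $H_{m}(\pi_{e\sharp}\mu) \lesssim 1$, making the estimate trivial. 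Since $\spt\mu \subset [0,1]^{2} \subset B(0,\sqrt{2})$ and $|\pi_{e}(x) - \pi_{e'}(x)| = |x \cdot (e - e')| \leq \sqrt{2}\,C_{0}\,2^{-m}$ for $x \in B(0,\sqrt{2})$, Lemma \ref{factsOfLife}(ii), applied with $R = \sqrt{2}$ at the scale $n := m - \lceil \log_{2} C_{0}\rceil \geq 0$, gives $|H(\pi_{e\sharp}\mu,\calD_{n}) - H(\pi_{e'\sharp}\mu,\calD_{n})| \leq C$ with $C$ absolute. On the other hand $\calD_{m}$ refines $\calD_{n}$, each cube (interval) of $\calD_{n}$ containing $2^{m-n}$ cubes of $\calD_{m}$, so Proposition \ref{CEF} (or Lemma \ref{factsOfLife}(iii)) yields $0 \leq H(\nu,\calD_{m}) - H(\nu,\calD_{n}) \leq (m - n)\log 2 \lesssim_{C_{0}} 1$ for every $\nu \in \calP(\R)$. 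Combining the two bounds gives $|H(\pi_{e\sharp}\mu,\calD_{m}) - H(\pi_{e'\sharp}\mu,\calD_{m})| \lesssim_{C_{0}} 1$, and dividing by $\log 2^{m} = m\log 2$ proves the displayed inequality.

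It remains to assemble the pieces. Fixing $e \in S_{2^{m}}$ and integrating the stability estimate over $e' \in I_{e}$ — using that $H_{m}(\pi_{e\sharp}\mu)$ does not depend on $e'$ — gives
\[ p_{e} \cdot H_{m}(\pi_{e\sharp}\mu) \;\geq\; \int_{I_{e}} H_{m}(\pi_{e'\sharp}\mu)\,d\sigma(e') \;-\; \frac{C_{1}\, p_{e}}{m} \]
for a constant $C_{1} = C_{1}(C_{0})$. Summing over $e \in S_{2^{m}}$, using that $\{I_{e}\}$ partitions $S^{1}$, that $\sum_{e} p_{e} = 1$, and Proposition \ref{entropyMarstrand}, I obtain
\[ \sum_{e \in S_{2^{m}}} p_{e} \cdot H_{m}(\pi_{e\sharp}\mu) \;\geq\; \int_{S^{1}} H_{m}(\pi_{e'\sharp}\mu)\,d\sigma(e') - \frac{C_{1}}{m} \;\geq\; s - ACm \cdot 2^{(s-1)m} - \frac{C_{1}}{m}, \]
valid for $0 < s < 1$; since $A \geq 1$, the two error terms are bounded by $AC'(m\cdot 2^{(s-1)m} + 1/m)$ with $C' := \max\{C, C_{1}\}$ depending only on the absolute constant of Proposition \ref{entropyMarstrand} and on $C_{0}$, which is the asserted inequality. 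The main obstacle, as indicated, is the stability estimate of the second paragraph; within it the only slightly delicate point is making sure that replacing the exact ``$\leq 2^{-n}$'' in Lemma \ref{factsOfLife}(ii) by the ``$\lesssim 2^{-m}$'' available here costs merely a $C_{0}$-dependent constant, which after normalisation by $\log 2^{m}$ disappears into the $O_{C_{0}}(1/m)$ term.
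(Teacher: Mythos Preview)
Your proof is correct and follows essentially the same approach as the paper: establish the stability estimate $|H_{m}(\pi_{e\sharp}\mu) - H_{m}(\pi_{e'\sharp}\mu)| \lesssim 1/m$ for $|e-e'| \lesssim 2^{-m}$ via Lemma \ref{factsOfLife}(ii), then discretise the integral in Proposition \ref{entropyMarstrand}. The only cosmetic difference is that the paper sets $p_{e} := \sigma(J_{e})$ for the overlapping arcs $J_{e} = \{\xi : |\xi - e| \leq C2^{-m}\}$ and uses $H_{m} \geq 0$ together with $\bigcup_{e} J_{e} \supset S^{1}$ to pass to $\int_{S^{1}}$, which sidesteps your thinning/separation manoeuvre entirely.
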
 

\begin{proof} For each $e \in S_{2^{m}}$, let
\begin{displaymath} J_{e} := \{\xi \in S^{1} : |\xi - e| \leq C2^{-m}\}. \end{displaymath}
If $C$ is large enough, $S^{1}$ is contained in the union of the arcs $J_{e}$. Let $p_{e} := \sigma(J_{e}) \sim 2^{-m}$, and note that
\begin{displaymath} |H_{m}(\pi_{e_{1}\sharp}\mu) - H_{m}(\pi_{e_{2}\sharp}\mu)| \lesssim \frac{1}{m} \end{displaymath}
by Lemma \ref{factsOfLife}(ii), whenever $e_{1},e_{2} \in J_{e}$ for a fixed $e \in S_{2^{m}}$. Then,
\begin{align*} \sum_{e \in S_{2^{m}}} p_{e} \cdot H_{m}(\pi_{e\sharp}\mu) & = \sum_{e \in S_{2^{m}}} \int_{J_{e}} H_{m}(\pi_{e\sharp}\mu) \, d\sigma(\xi)\\
& \geq \sum_{e \in S_{2^{m}}} \int_{J_{e}} (H_{m}(\pi_{\xi\sharp}\mu) - C/m) \, d\sigma(\xi)\\
& \geq \int_{S^{1}} H_{m}(\pi_{\xi\sharp}\mu) \, d\sigma(\xi) - \sum_{e \in S_{2^{m}}} \frac{Cp_{e}}{m}\\
& \geq s - AC(m \cdot 2^{(s - 1)m} + 1/m), \end{align*} 
where the constant $C$ possibly changed between the last two lines. The proof is complete. \end{proof}

To sum up the progress so far, Lemma \ref{multiScale} shows that, for a rather arbitrary function $f$, we can lower bound the entropy $H_{n}(f_{\sharp}\mu)$ by a linear combination of "partial entropies" of the form $H(f_{\sharp}\mu_{Q},\calD_{(k + 1)m} | \calD_{km})$. Further, if $f$ were the orthogonal projection $\pi_{e}$, a combination of equation \eqref{form17} and Corollary \ref{entropyCor} implies that the terms $H(\pi_{e\sharp}\mu_{Q},\calD_{(k + 1)m} | \calD_{km})$ are rather large on average. Next, we record a fairly standard "error estimate", saying that if $f$ and $\pi_{e}$ are close, then $H(\pi_{e\sharp}\mu_{Q},\calD_{(k + 1)m} | \calD_{km})$ and $H(f_{\sharp}\mu_{Q},\calD_{(k + 1)m} | \calD_{km})$ do not differ much, either:

\begin{lemma}\label{errorEstimate} Assume that $\mu \in \calP([0,1])^{2}$ is supported on a square $Q \in \calD_{km}$, $km \in \N$. Assume that $f \colon \R^{2} \to \R$ is continuously differentiable, and $|\nabla f(x) - e| \leq 2^{-m}$ for all $x \in Q$, and for some fixed vector $e \in S^{1}$. Then
\begin{displaymath} |H(\pi_{e\sharp}\mu,\calD_{(k + 1)m} | \calD_{km}) - H(f_{\sharp}\mu,\calD_{(k + 1)m} | \calD_{km})| \leq C \end{displaymath}
for some absolute constant $C \geq 1$. \end{lemma}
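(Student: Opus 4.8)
The plan is to reduce the comparison of the two conditional entropies to a comparison of ordinary dyadic entropies of pushforwards, and then to apply the stability estimates of Lemma \ref{factsOfLife}. First I would record that, since $\mu$ is supported on the cube $Q \in \calD_{km}$ of side-length $2^{-km}$, both $\pi_{e\sharp}\mu$ and $f_{\sharp}\mu$ are supported on intervals of length $O(2^{-km})$: for $\pi_{e}$ this is immediate, and for $f$ it follows from $|\nabla f| \leq |e| + |\nabla f - e| \leq 1 + 2^{-m} \leq 2$ on $Q$ together with the convexity of $Q$, so that $f$ is $2$-Lipschitz on $Q$. Hence each of these pushforwards meets only $O(1)$ intervals of $\calD_{km}$, and $H(\pi_{e\sharp}\mu,\calD_{km})$, $H(f_{\sharp}\mu,\calD_{km})$ are both bounded by an absolute constant. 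Since $\calD_{(k+1)m}$ refines $\calD_{km}$, Proposition \ref{CEF} gives $H(\nu,\calD_{(k+1)m}|\calD_{km}) = H(\nu,\calD_{(k+1)m}) - H(\nu,\calD_{km})$ for $\nu \in \{\pi_{e\sharp}\mu, f_{\sharp}\mu\}$, so it suffices to prove $|H(\pi_{e\sharp}\mu,\calD_{(k+1)m}) - H(f_{\sharp}\mu,\calD_{(k+1)m})| \leq C$.

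For this, let $x_{Q}$ denote the centre of $Q$, put $c := f(x_{Q}) - \pi_{e}(x_{Q}) \in \R$, and let $g := \pi_{e} + c$ be the corresponding affine map on $\R^{2}$. Integrating $\nabla f - e$ along the segment $[x_{Q},x] \subset Q$ and using the hypothesis $|\nabla f - e| \leq 2^{-m}$ on $Q$, one gets
\begin{displaymath} |f(x) - g(x)| = \left| \int_{0}^{1} [\nabla f(x_{Q} + t(x - x_{Q})) - e] \cdot (x - x_{Q}) \, dt \right| \leq 2^{-m}|x - x_{Q}| \leq \sqrt{2} \cdot 2^{-(k + 1)m} \end{displaymath}
for all $x \in Q$. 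Because $\mu$ is supported on $Q$, the measure $f_{\sharp}\mu$ is unchanged if $f$ is redefined to equal $g$ off $Q$; after this harmless modification, the hypotheses of Lemma \ref{factsOfLife}(ii) hold on $B(0,R)$ for a fixed absolute $R$ with $Q \subset B(0,R)$ (recall $Q$ meets $[0,1]^{2}$ and has side $\leq 1$), and with $n = (k+1)m$. Hence $|H(f_{\sharp}\mu,\calD_{(k+1)m}) - H(g_{\sharp}\mu,\calD_{(k+1)m})| \leq C$, with $C$ depending only on $R$, i.e.\ absolute.

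It remains to pass from $g_{\sharp}\mu$ back to $\pi_{e\sharp}\mu$. Since $g = \pi_{e} + c$, the measure $g_{\sharp}\mu$ is the translate of $\pi_{e\sharp}\mu$ by $c$, so $H(g_{\sharp}\mu,\calD_{(k+1)m}) = H(\pi_{e\sharp}\mu, \calD_{(k+1)m} - c)$, where $\calD_{(k+1)m} - c$ is the $c$-translated partition of $\R$. As an interval of length $2^{-(k+1)m}$ meets at most two dyadic intervals of the same length, and conversely, Lemma \ref{factsOfLife}(iii) gives $|H(\pi_{e\sharp}\mu, \calD_{(k+1)m} - c) - H(\pi_{e\sharp}\mu, \calD_{(k+1)m})| = O(1)$. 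Chaining the three comparisons proves the claim with an absolute constant.

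I do not anticipate a real obstacle; the only two points calling for a little care are that Lemma \ref{factsOfLife}(ii) is phrased for functions on a fixed ball whereas the gradient bound is only supplied on $Q$ — handled by the observation that $f_{\sharp}\mu$ depends only on $f|_{\spt \mu} \subset f|_{Q}$ — and the additive constant $c = f(x_{Q}) - \pi_{e}(x_{Q})$, which need not be small but is absorbed by the $O(1)$ translation-robustness of dyadic entropy, i.e.\ Lemma \ref{factsOfLife}(iii).
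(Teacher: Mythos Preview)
Your proof is correct and follows essentially the same route as the paper: both reduce, via Proposition \ref{CEF} and the observation that the $\calD_{km}$-entropies are $O(1)$, to comparing $H(\pi_{e\sharp}\mu,\calD_{(k+1)m})$ with $H(f_{\sharp}\mu,\calD_{(k+1)m})$, and both use the mean value estimate $|f - (\pi_{e}+c)| \lesssim 2^{-(k+1)m}$ on $Q$. The only difference is in the final step: the paper applies Lemma \ref{factsOfLife}(iii) directly, showing that $f^{-1}(I)\cap Q$ is covered by $O(1)$ sets $\pi_{e}^{-1}(I')\cap Q$ and vice versa, whereas you pass through the affine approximation $g=\pi_{e}+c$, invoke Lemma \ref{factsOfLife}(ii) for $f$ versus $g$, and then absorb the translation by $c$ via Lemma \ref{factsOfLife}(iii). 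This is a harmless reshuffling of the same ingredients.
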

\begin{proof} First note that
\begin{displaymath} H(\pi_{e\sharp}\mu,\calD_{(k + 1)m} | \calD_{km}) = H(\pi_{e\sharp}\mu,\calD_{(k + 1)m}) - H(\pi_{e\sharp}\mu, \calD_{km}) \end{displaymath}
by Proposition \ref{CEF}, and the second term is bounded by a constant, since the support of $\pi_{e\sharp}\mu_{Q}$ is contained in a constant number of cubes (rather: intervals) in $\calD_{km}$. Since $f$ is $2$-Lipschitz on $Q$, similar considerations apply to $H(f_{\sharp}\mu,\calD_{(k + 1)m} | \calD_{km})$. Consequently, it suffices to show that
\begin{displaymath} |H(\pi_{e\sharp}\mu,\calD_{(k + 1)m}) - H(f_{\sharp}\mu_{Q},\calD_{(k + 1)m})| \leq C. \end{displaymath}
To this end, by Lemma \ref{factsOfLife} (iii) and the assumption on the support of $\mu$, one needs to prove that any set of the form $f^{-1}(I) \cap Q$ with $I \in \calD_{(k + 1)m}$ can be covered by a constant number of sets $\pi_{e}^{-1}(I') \cap Q$, $I' \in \calD_{(k + 1)m}$ and vice versa. Fix $I \in \calD_{(k + 1)m}$ and consider $x,y \in f^{-1}(I) \cap Q$. Then, for some $\xi$ on the line segment connecting $x$ and $y$, one has
\begin{displaymath} |\pi_{e}(x) - \pi_{e}(y)| = |(x - y) \cdot (e - \nabla f(\xi))| + |f(x) - f(y)| \lesssim 2^{-(k + 1)m}. \end{displaymath}
This proves that $f^{-1}(I)$ is contained in a bounded number of sets $\pi_{e}^{-1}(I')$, $I' \in \calD_{(k + 1)m}$, and the converse inclusion is verified similarly. \end{proof}

\section{Proof of the main theorem}\label{mainProof}
We are prepared to prove Theorem \ref{main}. We first do so for upper box dimension instead of packing dimension. In other words, we prove the following theorem:
\begin{thm}\label{mainBox} Assume that $\emptyset \neq K \subset \R^{2}$ is a bounded $\calH^{s}$-measurable $s$-AD-regular set with $s \geq 1$. Then $\Bd D(K) = 1$.
\end{thm}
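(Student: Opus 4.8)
The plan is to bound $\Bd D(K)$ from below via the normalised entropies $H_{n}$ of pinned distance measures, using the tools of Section \ref{preliminaries} together with — crucially — the Ahlfors--David regularity. After rescaling we may assume $K \subset [0,1]^{2}$, and we put $\mu := \calH^{s}|_{K}/\calH^{s}(K) \in \calP([0,1]^{2})$. Since $K$ is $s$-AD-regular with $s \geq 1$, the measure $\mu$ satisfies the linear growth bound $\mu(B(x,r)) \leq Ar^{s} \leq Ar$ for $0 < r \leq 1$ (enlarging $A$ if necessary), and — using the lower regularity bound $\mu(Q) \gtrsim_{A} 2^{-Ns}$ for $Q \in \calD_{N}$ with $\mu(Q) > 0$ — every blow-up $\mu^{Q}$ likewise has linear growth with a constant $\lesssim_{A} 1$. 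Because $D(K) \subset [0,\sqrt{2}]$ we trivially have $\Bd D(K) \leq 1$, so only $\Bd D(K) \geq 1$ is in question. For $x_{0} \in K$ write $f_{x_{0}}(y) := |x_{0} - y|$, so $f_{x_{0}}(K) \subset D(K)$ and $\spt f_{x_{0}\sharp}\mu \subset \overline{D(K)}$. Since $H_{n}(\nu) \leq \log N(\spt\nu, 2^{-n})/(n\log 2) + O(1/n)$ for bounded $\nu \in \calP(\R)$, one has $\limsup_{n} H_{n}(\nu) \leq \Bd \spt\nu$; hence it suffices to exhibit, for each $\epsilon > 0$, a point $x_{0} \in K$ with $\limsup_{n} H_{n}(f_{x_{0}\sharp}\mu) \geq 1 - \epsilon$, and I will obtain this for $\mu$-a.e.\ $x_{0}$.

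Fix $\epsilon > 0$, pick $s' < 1$ close to $1$, then $m$ large, then $n \gg m$; let $x_{0} \in K$ be arbitrary. Lemma \ref{multiScale} applied to $f_{x_{0}}$ gives
\begin{displaymath} H_{n}(f_{x_{0}\sharp}\mu) \geq \frac{1}{n}\sum_{k=0}^{\floor*{n/m}-1}\sum_{Q \in \calD_{km}}\mu(Q)\,H(f_{x_{0}\sharp}\mu_{Q}, \calD_{(k+1)m}\,|\,\calD_{km}). \end{displaymath}
Call $Q \in \calD_{km}$ \emph{$x_{0}$-good} if $\dist(x_{0},Q) \geq 2\cdot 2^{-(k-1)m}$. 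For such $Q$ the gradient $\nabla f_{x_{0}}(y) = (y - x_{0})/|y - x_{0}|$ varies by at most $2^{-m}$ over $Q$, so with $e_{Q} = e_{Q}(x_{0}) := (c_{Q} - x_{0})/|c_{Q} - x_{0}| \in S^{1}$ ($c_{Q}$ the centre of $Q$), Lemma \ref{errorEstimate} and \eqref{form17} give $H(f_{x_{0}\sharp}\mu_{Q}, \calD_{(k+1)m}\,|\,\calD_{km}) \geq m\,H_{m}(\pi_{e_{Q}(x_{0})\sharp}\mu^{Q}) - C$ with $C$ absolute. The $x_{0}$-bad cubes in $\calD_{km}$ lie in $B(x_{0}, 3\cdot 2^{-(k-1)m})$ and so carry $\mu$-mass $\lesssim_{A} 2^{-m}$ once $k \geq 2$ (here $s \geq 1$ is used); discarding them and the levels $k \in \{0,1\}$ — all non-negative terms — one gets
\begin{displaymath} H_{n}(f_{x_{0}\sharp}\mu) \geq \frac{m}{n}\sum_{k=2}^{\floor*{n/m}-1}\ \sum_{\substack{Q \in \calD_{km} \\ x_{0}\text{-good}}}\mu(Q)\,H_{m}(\pi_{e_{Q}(x_{0})\sharp}\mu^{Q})\ -\ \frac{C}{m}. \end{displaymath}

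Now integrate in $x_{0}$ against $\mu$ and apply Fubini. For fixed $Q \in \calD_{km}$, $\int_{K} H_{m}(\pi_{e_{Q}(x_{0})\sharp}\mu^{Q})\,\1\{Q\ x_{0}\text{-good}\}\,d\mu(x_{0}) = \int_{S^{1}} H_{m}(\pi_{e\sharp}\mu^{Q})\,d\lambda_{Q,k}(e)$, where $\lambda_{Q,k}$ is the push-forward of $\mu$ restricted to $\{x_{0} : \dist(x_{0},Q) \geq 2\cdot 2^{-(k-1)m}\}$ under the radial map $x_{0} \mapsto (c_{Q} - x_{0})/|c_{Q} - x_{0}|$; note $\lambda_{Q,k}(S^{1}) \geq 1 - O_{A}(2^{-m})$. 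Thus
\begin{displaymath} \int_{K} H_{n}(f_{x_{0}\sharp}\mu)\,d\mu(x_{0}) \geq \frac{m}{n}\sum_{k=2}^{\floor*{n/m}-1}\sum_{\substack{Q \in \calD_{km} \\ \mu(Q) > 0}}\mu(Q)\int_{S^{1}} H_{m}(\pi_{e\sharp}\mu^{Q})\,d\lambda_{Q,k}(e) - \frac{C}{m}, \end{displaymath}
and everything reduces to the estimate
\begin{displaymath} \sum_{\substack{Q \in \calD_{km} \\ \mu(Q) > 0}}\mu(Q)\int_{S^{1}} H_{m}(\pi_{e\sharp}\mu^{Q})\,d\lambda_{Q,k}(e) \geq s' - \epsilon, \qquad k \geq 2,\ m \geq m_{0}(s,A,s',\epsilon), \end{displaymath}
which I call $(\star)$. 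Were $\lambda_{Q,k}$ replaced by the uniform measure $\sigma$ on $S^{1}$, $(\star)$ would follow immediately from Corollary \ref{entropyCor} (via Proposition \ref{entropyMarstrand}), since $\mu^{Q}$ has linear growth with constant $\lesssim_{A} 1$, so $\int_{S^{1}} H_{m}(\pi_{e\sharp}\mu^{Q})\,d\sigma(e) \geq s' - C_{A}(m2^{(s'-1)m} + 1/m) \geq s' - \epsilon/2$ for $m$ large. The serious issue — and, I expect, the main obstacle of the whole proof — is that $\lambda_{Q,k}$, a radial projection of $\mu$, can be highly concentrated, even purely atomic, and bears no a priori relation to $\sigma$. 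Here $s$-AD-regularity must be used beyond mere linear growth: if $\lambda_{Q,k}$ puts substantial mass near a direction $e$, then substantial $\mu$-mass lies essentially on the line through $c_{Q}$ in direction $e$, whence AD-regularity forces $\mu$ near $c_{Q}$, and with it $\mu^{Q}$, to place a definite proportion of its mass on a line segment in direction $e$; but then $\pi_{e\sharp}\mu^{Q}$ is spread out, so $H_{m}(\pi_{e\sharp}\mu^{Q})$ is large precisely where $\lambda_{Q,k}$ is large. Making this dichotomy quantitative — e.g.\ by splitting the pins $x_{0}$ into dyadic annuli about $c_{Q}$, estimating the $\mu$-mass of each shell via AD-regularity, and running a radial-projection non-concentration argument (of the kind available for linear-growth measures) shell by shell — should show that the $\mu$-measure of the cubes $Q$ with $\int H_{m}(\pi_{e\sharp}\mu^{Q})\,d\lambda_{Q,k}(e) < s' - \epsilon$ is $o_{m}(1)$, which yields $(\star)$; in fact it is enough to prove $(\star)$ on average over $k \in \{2,\ldots,\floor*{n/m}-1\}$.

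Granting $(\star)$, the preceding display gives $\int_{K} H_{n}(f_{x_{0}\sharp}\mu)\,d\mu(x_{0}) \geq \tfrac{m}{n}(\floor*{n/m}-2)(s'-\epsilon) - C/m \geq (1 - \tfrac{3m}{n})(s'-\epsilon) - C/m$, so $\limsup_{n}\int_{K} H_{n}(f_{x_{0}\sharp}\mu)\,d\mu(x_{0}) \geq s' - \epsilon - C/m$. As $H_{n}(f_{x_{0}\sharp}\mu) \leq 2$ for all large $n$ and all $x_{0}$, Fatou's lemma (applied to $2 - H_{n}(f_{x_{0}\sharp}\mu) \geq 0$) yields $\int_{K}\limsup_{n} H_{n}(f_{x_{0}\sharp}\mu)\,d\mu(x_{0}) \geq s' - \epsilon - C/m$; letting $m \to \infty$ and then $s' \to 1$, $\epsilon \to 0$ (the thresholds $m_{0}(s,A,s',\epsilon)$ remaining finite throughout) gives $\int_{K}\limsup_{n} H_{n}(f_{x_{0}\sharp}\mu)\,d\mu(x_{0}) \geq 1$. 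Since $\limsup_{n} H_{n}(f_{x_{0}\sharp}\mu) \leq \Bd\overline{f_{x_{0}}(K)} \leq 1$ for every $x_{0}$, we conclude $\limsup_{n} H_{n}(f_{x_{0}\sharp}\mu) = 1$ for $\mu$-a.e.\ $x_{0}$; fixing such an $x_{0}$, $\Bd D(K) \geq \Bd\overline{f_{x_{0}}(K)} \geq \limsup_{n} H_{n}(f_{x_{0}\sharp}\mu) = 1$, and with the trivial upper bound $\Bd D(K) = 1$.
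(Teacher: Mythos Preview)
Your argument has a genuine gap at $(\star)$, which you acknowledge, and the heuristic you offer does not close it. The claim that concentration of $\lambda_{Q,k}$ near a direction $e$ forces, via AD-regularity, $\mu^{Q}$ to lie along a segment in direction $e$ has no justification: AD-regularity controls masses of balls, not directional structure, and there is no mechanism linking the global distribution of $K$ in a cone from $c_{Q}$ to the geometry of $K\cap Q$. For a curve-like $s$-regular set one can have, for many cubes $Q$, both $\lambda_{Q,k}$ and $\mu^{Q}$ concentrated near the \emph{same} direction, so that $H_{m}(\pi_{e\sharp}\mu^{Q})$ is small exactly where $\lambda_{Q,k}$ is large. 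Radial-projection non-concentration results do exist for linear-growth measures, but they concern projections from $\mu$-typical points, not from the deterministic centres $c_{Q}$, and in any case lie outside the toolkit of Section~\ref{preliminaries}. A secondary error: the claim that every $\mu^{Q}$ has linear growth constant $\lesssim_{A}1$ is false, since a dyadic cube $Q$ with $\mu(Q)>0$ may have $\calH^{s}(K\cap Q)\ll\ell(Q)^{s}$; the correct constant is $\lesssim_{A}\ell(Q)^{s}/\calH^{s}(K\cap Q)$.

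The paper avoids all of this by organising the pins differently. One first shows (Besicovitch--Miller for the rectifiable case; Marstrand slicing or unrectifiability otherwise) that for some $0\in K$ the direction set $R_{0}=\{y/|y|:y\in K\setminus\{0\}\}$ is dense in $S^{1}$. One then \emph{chooses} a $2^{-m}$-net $S_{2^{m}}\subset R_{0}$ and, for each $e\in S_{2^{m}}$, a pin $x_{e}\in K$ with $x_{e}/|x_{e}|=e$, and sets $f_{e}(y)=|y-x_{e}|^{2}/(2|x_{e}|)$. Restricting $\mu$ to a tiny square $Q_{0}$ near $0$ (small relative to all $|x_{e}|$), every $y\in Q_{0}$ satisfies $|\nabla f_{e}(y)-(-e)|\leq 2^{-m}$, so after Lemma~\ref{errorEstimate} and \eqref{form17} each cube $Q\subset Q_{0}$ contributes $H_{m}(\pi_{-e\sharp}\mu^{Q})$ \emph{independently of its position}. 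Averaging now over $e\in S_{2^{m}}$ with the uniform weights $p_{e}\sim 2^{-m}$ places one exactly in the hypothesis of Corollary~\ref{entropyCor}; no radial-projection estimate is ever needed. The unbounded growth constants of the $\mu^{Q}$ are absorbed by a counting argument: there are $\lesssim_{A}\calH^{s}(K\cap Q_{0})/\ell(Q)^{s}$ cubes $Q\subset Q_{0}$ with $\mu(Q)>0$ in each generation, so the sum of $\ell(Q)^{s}/\calH^{s}(K\cap Q_{0})$ over such $Q$ is $\lesssim_{A}1$.
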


The proof of Theorem \ref{mainBox} contains all the main ideas needed for Theorem \ref{main}. The passage from box dimension to packing dimension only requires a fairly standard argument using Baire's theorem. The details can be found in Section \ref{packingDimension}. 

\begin{proof}[Proof of Theorem \ref{mainBox}] Let $s \geq 1$, and let $K \subset B(0,1)$ be an $s$-AD-regular set with $\mathcal{H}^{s}(K) > 0$. If $s = 1$ and $K$ contains a non-trivial rectifiable part, then a result of Besicovitch and Miller \cite{BM} from 1948 tells us immediately that $D(K)$ has positive length: in particular $\Pd D(K) = \Bd D(K) = 1$. So, one may assume that either $s > 1$, or $K$ is purely $1$-unrectifiable. In both cases, the following holds for $\mathcal{H}^{s}$ almost all $x \in K$: the set of directions
\begin{displaymath} R_{x} := \left\{\frac{y - x}{|y - x|} : y \in K \setminus \{x\} \right\} \end{displaymath}
is dense in $S^{1}$. For $s > 1$, this is a consequence of Marstrand's classical slicing result, see \cite{Ma} or the exposition in Mattila's book \cite[Chapter 10]{Mat}. For $s = 1$ the claim follows from unrectifiability, see (the proof of) \cite[Lemma 15.13]{Mat}. For convenience, assume that $0 \in K$, and $R_{0}$ is dense in $S^{1}$. 

Let $m \in \N$ be a large integer to be specified later, and choose a $2^{-m}$-net of vectors $S_{2^{m}} \subset R_{0}$. Also, for each $e \in S_{2^{m}}$, fix a point $x_{e} \in K \setminus \{0\}$ such that $x_{e}/|x_{e}| = e$. With each such $x_{e}$, associate the mapping $f_{e}(y) := |y - x_{e}|^{2}/(2|x_{e}|)$. The point of this definition is that
\begin{equation}\label{form20} \nabla f_{e}(y) = \frac{y - x_{e}}{|x_{e}|}, \end{equation}
so if $y$ ranges in a small square $Q_{0}$ with $\dist(0,Q_{0}) \ll |x_{e}|$ and $\ell(Q_{0}) \ll |x_{e}|$, then $\nabla f_{e}(y)$ deviates only a bit from the vector $-x_{e}/|x_{e}| \in -S_{2^{m}}$. 

Now fix a small dyadic square $Q_{0}$ such that $\overline{Q_{0}}$ contains the origin and such that $\mathcal{H}^{s}(K \cap Q_{0}) \geq \ell(Q_{0})^{s}/(10 A)$; this is possible since $0 \in K$. Fixing also $0 < t < 1$, the claim is that that if $m$ is large enough (depending on $t$ and the AD-regularity constant $A$ of $K$), and $Q_{0}$ is small enough (depending on the lengths of the vectors $x_{e}$ selected after $m$ was chosen), we can prove that
\begin{equation}\label{form18} \sum_{e \in S_{2^{m}}} p_{e} \cdot N(f_{e}(K \cap Q_{0}),\delta) \geq \delta^{-t} \end{equation} 
for all small enough $\delta > 0$ (depending on $\ell(Q_{0})$). The constants $p_{e} \sim 2^{-m}$ were defined in Corollary \ref{entropyCor}. Inequality \eqref{form18} clearly implies Theorem \ref{mainBox}. 

To prove \eqref{form18}, let 
\begin{displaymath} \mu := (\calH^{s}|_{K})_{Q_{0}} = \frac{1}{\calH^{s}(K \cap Q_{0})}\calH^{s}|_{K \cap Q_{0}}. \end{displaymath}
Recalling that $\calH^{s}(K \cap Q_{0}) \geq \ell(Q_{0})^{s}/(10A)$, the measure $\mu$ is a well-defined probability measure supported on $Q_{0}$. Apply Lemma \ref{multiScale} to the mappings $f_{e}$ to deduce that
\begin{displaymath}  \sum_{e \in S_{2^{m}}} p_{e} \cdot H_{n}(f_{e\sharp}\mu) \geq \frac{1}{n} \sum_{k = 0}^{\floor*{n/m}} \sum_{Q \in \calD_{km}} \mu(Q) \left[ \sum_{e \in S_{2^{m}}} p_{e} \cdot H(f_{e\sharp}\mu_{Q},\calD_{(k + 1)m} | \calD_{km}) \right]. \end{displaymath}
To get our hands on $\pi_{e}$ instead of $f_{e}$, we apply Lemma \ref{errorEstimate}, which says that
\begin{equation}\label{form19}  H(f_{e\sharp}\mu_{Q},\calD_{(k + 1)m} | \calD_{km})  \geq  H(\pi_{e'\sharp}\mu_{Q},\calD_{(k + 1)m} | \calD_{km}) - C, \end{equation}
as soon as $|\nabla f_{e}(y) - e'| \leq 2^{-m}$ for $y \in Q_{0}$. With $e' = -e$, equation \eqref{form20} guarantees that this can be arranged by choosing $\ell(Q_{0})$ small enough depending only on $|x_{e}|$ and $m$ (we will have no needs for the precise bounds on $\ell(Q_{0})$). After applying \eqref{form19} and \eqref{form17}, we end up with
\begin{align} \sum_{e \in S_{2^{m}}} p_{e} \cdot H_{n}(f_{e\sharp}\mu) & \geq \frac{1}{n} \sum_{k = 0}^{\floor*{n/m}} \sum_{Q \in \calD_{km}} \mu(Q) \left[ \sum_{e \in -S_{2^{m}}} p_{e} \cdot (m \cdot H_{m}(\pi_{e\sharp}\mu^{Q}) - 3 - C) \right] \notag\\
&\label{form21} \geq \left[\frac{m}{n} \sum_{k = 0}^{\floor*{n/m}} \sum_{Q \in \calD_{km}} \mu(Q) \sum_{e \in -S_{2^{m}}} p_{e} \cdot H_{m}(\pi_{e\sharp}\mu^{Q}) \right] - \frac{3C}{m}, \end{align} 
assuming that $C \geq 3$ in \eqref{form19} and recalling that $p_{e} \sim 1/|S_{2^{m}}|$. 

Next, a straightforward calculation shows that that for $Q \subset Q_{0}$ with $\mu(Q) > 0$, the measures $\mu^{Q}$ satisfy the growth condition $\mu^{Q}(B(x,r)) \leq C_{Q}r^{s}$, $0 < r \leq 1$, for some constant $C_{Q} \lesssim A\ell(Q)^{s}/\calH^{s}(K \cap Q)$.\footnote{One cannot quite say that the measures $\mu^{Q}$ are $(s,C_{Q})$-AD-regular, because the lower bound on the measure of balls may fail close to the boundary of $[0,1)^{2}$.} Here $A$ is the AD-regularity constant of $K$. In particular, since $s \geq 1$, they also satisfy the linear growth condition
\begin{displaymath} \mu^{Q}(B(x,r)) \leq \left(\frac{AC\ell(Q)^{s}}{\calH^{s}(K \cap Q)}\right) r, \qquad 0 < r \leq 1. \end{displaymath} 
Let $k_{0}$ be the smallest integer such that $2^{-k_{0}m} \leq \ell(Q_{0})$ (this implies that if $k \geq k_{0}$ and $Q \in \calD_{km}$ satisfies $\mu(Q) > 0$, then $Q \subset Q_{0}$, and hence the previous discussion is valid). Applying Corollary \ref{entropyCor} with $-S_{2^{m}}$ in place of $S_{2^{m}}$, and some $t' \in (t,1)$, the quantity in brackets on line \eqref{form21} is bounded from below as follows:
\begin{align*} \frac{m}{n} & \sum_{k = k_{0}}^{\floor*{n/m}} \mathop{\sum_{Q \in \calD_{km}}}_{\mu(Q) > 0} \mu(Q) \left(t' - \left(\frac{AC\ell(Q)^{s}}{\calH^{s}(K \cap Q)} \right)(m \cdot 2^{(t' - 1)m} + \frac{1}{m})\right)\\
& = t' \cdot \frac{m}{n} \cdot (\floor*{n/m} - k_{0}) - \frac{ACm}{\calH^{s}(K \cap Q_{0})n} \sum_{k = k_{0}}^{\floor*{n/m}} \mathop{\sum_{Q \in \calD_{km}}}_{\mu(Q) > 0} \ell(Q)^{s}\left(m \cdot 2^{(t' - 1)m} + \frac{1}{m} \right). \end{align*} 

To proceed further, observe that, for any fixed generation of squares $Q$ with $\ell(Q) = r \leq \ell(Q_{0})$, there are at most $A^{3}C\calH^{s}(K \cap Q_{0})/r^{s}$ squares $Q \subset Q_{0}$ such that $\mu(Q) > 0$. Indeed, by the $(s,A)$-AD regularity of $K$, each square $Q$ with $\mu(Q) > 0$ is adjacent to a "good" square $Q'$ with $\ell(Q') = r$ and $\calH^{s}(K \cap Q') \geq r^{s}/(10A)$. It is possible that such a $Q'$ lies outside $Q_{0}$, but it is certainly contained in $2Q_{0}$; thus, recalling that $\calH^{s}(K \cap Q_{0}) \geq \ell(Q_{0})^{s}/(10A)$, the number of "good" squares is bounded by
\begin{displaymath} \frac{10 A\calH^{s}(K \cap 2Q_{0})}{r^{s}} \leq \frac{100 A^{2}\ell(Q_{0})^{s}}{r^{s}} \leq \frac{1000 A^{3}\calH^{s}(K \cap Q_{0})}{r^{s}}. \end{displaymath}
Since each of the "good" square $Q'$ is adjacent to at most eight squares $Q$ with $\mu(Q) > 0$, the number of these $Q$ is bounded by $8000A^{3}\calH^{s}(K \cap Q_{0})/r^{s}$.

Combining everything so far, one has the estimate
\begin{displaymath} \sum_{e \in S_{2^{m}}} p_{e} \cdot H_{n}(f_{e\sharp}\mu) \geq t' \cdot \frac{m}{n} \cdot (\floor*{n/m} - k_{0}) - \frac{A^{4}Cm}{n} \cdot \floor*{n/m} \cdot (m \cdot 2^{(t' - 1)m} + 1/m) - \frac{3C}{m}, \end{displaymath}
valid for any $t \leq t' < 1$ and $n$ so large that $n/m > k_{0}$. Specialising to $t' := (1 + t)/2$, say, and choosing $m \geq m(A,t)$, where $m(A,t) \in \N$ depends only on $A$ and $t$, one obtains
\begin{displaymath} \sum_{e \in S_{2^{m}}} p_{e} \cdot H_{n}(f_{e\sharp}\mu) \geq t  \end{displaymath}
for large enough $n$ (so large that $(m/n) \cdot (\floor*{n/m} - k_{0})$ is close enough to $1$). Via the following lemma, this implies \eqref{form18}. The proof of Theorem \ref{mainBox} is complete.
\end{proof}

\begin{lemma}\label{entropyAndCovering} Let $\nu \in \calP(\R^{d})$, and assume that $H_{n}(\nu) \geq s$. Then
\begin{displaymath} |\{Q \in \calD_{n} : \nu(Q) > 0\}| > 2^{nt} \end{displaymath}
for any $t < s - 1/(n \log 2)$. In particular, $N(\spt \nu,2^{-n}) \gtrsim 2^{nt}$ for such $t$.
\end{lemma}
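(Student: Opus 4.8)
The plan is to reduce this to the elementary maximum-entropy bound -- a probability vector supported on $N$ cells has Shannon entropy at most $\log N$ -- and then to unwind the normalisation in the definition of $H_n$. Concretely, let $N := |\{Q \in \calD_n : \nu(Q) > 0\}|$; if $N = \infty$ there is nothing to prove, so I assume $N < \infty$. By the concavity of $\log$ and Jensen's inequality, applied with the weights $\nu(Q)$ over those $Q \in \calD_n$ with $\nu(Q) > 0$,
\begin{displaymath} H(\nu,\calD_n) = \sum_{Q : \nu(Q) > 0} \nu(Q) \log \frac{1}{\nu(Q)} \leq \log \Big( \sum_{Q : \nu(Q) > 0} \nu(Q) \cdot \frac{1}{\nu(Q)} \Big) = \log N. \end{displaymath}
Since $H_n(\nu) = H(\nu,\calD_n)/(n \log 2)$, the hypothesis $H_n(\nu) \geq s$ yields $\log N \geq s n \log 2$, that is, $N \geq 2^{sn}$. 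Any exponent $t < s - 1/(n \log 2)$ satisfies in particular $t < s$, so $N \geq 2^{sn} > 2^{tn}$, which is the first claim.

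For the final assertion, I would note that each $Q \in \calD_n$ with $\nu(Q) > 0$ contains a point of $\spt \nu$, while a ball of radius $2^{-n}$ has diameter $2^{-n+1}$ and hence meets at most $C_d$ cubes of $\calD_n$, where $C_d$ is a purely dimensional constant (one may take $C_d = 3^d$). Thus, if $B(x_1,2^{-n}),\ldots,B(x_M,2^{-n})$ cover $\spt \nu$, then every cube in $\{Q \in \calD_n : \nu(Q) > 0\}$ is met by at least one of these balls, so $N \leq C_d M$; taking the infimum over such covers gives $N(\spt \nu, 2^{-n}) \geq N/C_d \gtrsim_d 2^{tn}$.

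I do not expect any genuine obstacle here: the statement is routine once the maximum-entropy bound is recalled. The only points needing a little care are getting the direction of Jensen's inequality right ($\log$ is concave, so the inequality goes as displayed), and tracking the harmless dimensional constant incurred when replacing dyadic $2^{-n}$-cubes by $2^{-n}$-balls. The slack term $1/(n \log 2)$ built into the hypothesis on $t$ is not actually used by this argument -- $t < s$ would already suffice -- but it is convenient to keep it, since in the application the estimate is summed over measures $\nu = f_{e\sharp}\mu$ whose entropies $H_n(f_{e\sharp}\mu)$ vary with $e$, and the resulting constant factor is then absorbed into the $\gtrsim$ notation.
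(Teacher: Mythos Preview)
Your proof is correct, and in fact sharper than the paper's. You use the elementary maximum-entropy bound $H(\nu,\calD_n) \leq \log N$ directly via Jensen, which immediately gives $N \geq 2^{sn}$ and hence $N > 2^{tn}$ for every $t < s$; the slack $1/(n\log 2)$ is never needed, as you observe. The paper instead argues by contradiction and a layer-cake decomposition: assuming $N \leq 2^{nt}$, it writes $H_n(\nu) = \int_0^\infty \nu(\{Q : \nu(Q) \leq 2^{-\lambda n}\})\,d\lambda$, bounds the integrand by $1$ on $[0,t]$ and by $2^{(t-\lambda)n}$ on $[t,\infty)$ (since at most $2^{nt}$ cubes each carry mass $\leq 2^{-\lambda n}$), and integrates to obtain $s \leq t + 1/(n\log 2)$. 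This is where the extra $1/(n\log 2)$ term in the statement comes from. Your argument buys simplicity and a cleaner conclusion; the paper's approach, while more roundabout here, is the kind of tail estimate that would generalise to situations where one only controls the number of cubes above a threshold rather than all cubes with positive mass. Your covering-number remark is also fine and matches the paper's brief ``In particular'' claim.
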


\begin{remark} Note that the converse of the lemma is false: a large covering number certainly does not guarantee large entropy. 
\end{remark}

\begin{proof}[Proof of Lemma \ref{entropyAndCovering}] Assume that $|\{Q \in \calD_{n} : \nu(Q) > 0\}| \leq 2^{nt}$ for some $t$, and let $\calD_{n}^{\lambda-\textup{bad}}$, $\lambda \geq 0$, be the cubes $Q \in \calD_{n}$ such that $\nu(Q) \leq 2^{-\lambda n}$. Then
\begin{displaymath} \sum_{Q \in \calD_{n}^{\lambda-\textup{bad}}} \nu(Q) \leq 2^{(t - \lambda)n}, \qquad \lambda \geq t, \end{displaymath}
so that
\begin{align*} s \leq H_{n}(\nu) & = \int_{0}^{\infty} \nu\left( \bigcup \left\{Q : \frac{\log \nu(Q)}{\log 2^{-n}} \geq \lambda \right\} \right) \, d\lambda\\
& \leq t + \int_{t}^{\infty} \nu \left( \bigcup \left\{ Q : \nu(Q) \leq 2^{-\lambda n} \right\} \right) \, d\lambda\\
& \leq t + \int_{t}^{\infty} 2^{(t - \lambda)n} \, d\lambda = t + \frac{1}{n \log 2}. \end{align*}
This proves the lemma. \end{proof}

\subsection{Packing dimension}\label{packingDimension} The purpose of this section is to complete the proof of Theorem \ref{main}. This builds heavily on the proof of Theorem \ref{mainBox} from the previous section. The inequality \eqref{form18} certainly implies that 
\begin{equation}\label{form22} \max_{e \in S_{2^{m}}} \Bd f_{e}(K \cap Q_{0}) \geq t. \end{equation}
The choice of the square $Q_{0}$ was somewhat arbitrary, and the same proof gives the following just as well: if $Q \subset Q_{0}$ is any square with $\calH^{s}(K \cap Q) \geq \ell(Q)^{s}/(10A)$, then \eqref{form22} holds with $Q$ in place of $Q_{0}$. Now, we claim
\begin{displaymath} \max_{e \in S_{2^{m}}} \Pd f_{e}(K \cap \overline{Q_{0}}) \geq t, \end{displaymath}
assuming that $K$ is compact and purely unrectifiable. This implies that $\Pd D(K) = 1$, since $t < 1$ is arbitrary. The pure unrectifiability will only be used here to infer that boundaries of squares are $\calH^{s}|_{K}$-null. Since $K$ is compact and the mappings $f_{e}$ are continuous, it suffices to prove that
\begin{equation}\label{form23} \max_{e \in S_{2^{m}}} \Pd \overline{f_{e}(K \cap \interior Q_{0})} \geq t. \end{equation}
Note that $K \cap \interior Q_{0} \neq \emptyset$, since otherwise $\calH^{s}(K \cap \partial Q_{0}) > 0$. Recall that
\begin{displaymath} \dim_{\textup{p}}(B) = \inf \left\{ \sup \Bd F_{i} : B \subset \bigcup_{i} F_{i}, \: F_{i} \text{ closed}\right\}. \end{displaymath} 
So, if \eqref{form23} fails, one may find $t' < t$ such that each set $\overline{f_{e}(K \cap \interior Q_{0})}$, $e \in S_{2^{m}}$, can be covered by closed sets $F_{i}^{e}$, $i \in \N$, satisfying the uniform bound $\Bd F_{i}^{e} \leq t'$. 

Write $S_{2^{m}} := \{e_{1},\ldots,e_{N}\}$. We first study $\overline{f_{e_{1}}(K \cap \interior Q_{0})}$. Since $\overline{f_{e_{1}}(K \cap \interior Q_{0})}$ is compact and not empty, Baire's theorem says that one of the sets $F_{i}^{e_{1}}$, say $F_{i_{1}}^{e_{1}}$, has non-empty interior in the relative topology of $\overline{f_{e_{1}}(K \cap \interior Q_{0})}$. In other words, there exists an open set $U^{e_{1}} \subset \R$ such that
\begin{displaymath} \emptyset \neq U^{e_{1}} \cap \overline{f_{e_{1}}(K \cap \interior Q_{0})} \subset F_{i_{1}}^{e_{1}}. \end{displaymath}
Now, find a dyadic square $Q_{1} \subset \interior Q_{0}$ with $\calH^{s}(K \cap Q_{1}) \geq \ell(Q_{1})^{s}/(10A)$ such that 
\begin{equation}\label{form24} \overline{f_{e_{1}}(K \cap Q_{1})} \subset U^{e_{1}} \cap \overline{f_{e_{1}}(K \cap \interior Q_{0})} \subset F_{i_{1}}^{e_{1}}. \end{equation}
This is easy: since $U^{e_{1}}$ is open, one can find $x \in K \cap \interior Q_{0}$ such that $f(x) \in U^{e_{1}} \cap f_{e_{1}}(K \cap \interior Q_{0})$. Then, the AD-regularity of $K$ shows that the desired square $Q_{1} \subset \interior Q_{0}$ can be found inside $B(x,r) \subset \interior Q_{0}$ for some small enough radius $r > 0$. 

The next step is to apply \eqref{form22} (rather the discussion just below it) to the square $Q_{1}$: there exists some index $j \in \{1,\ldots,N\}$ such that 
\begin{displaymath} \Bd f_{e_{j}}(K \cap Q_{1}) \geq t. \end{displaymath}
Certainly $j \neq 1$ by \eqref{form24} and the uniform bound $\Bd F_{i}^{e} \leq t' < t$. 

Assume for instance that $j = 2$. The set $K \cap \interior Q_{1}$ is non-empty, and $\overline{f_{e_{2}}(K \cap \interior Q_{1})}$ is covered by the sets $F_{i}^{e_{2}}$. By Baire's theorem again, there is an index $i_{2}$ and an open set $U^{e_{2}}$ such that
\begin{displaymath} \emptyset \neq U^{e_{2}} \cap \overline{f_{e_{2}}(K \cap \interior Q_{1})} \subset F_{i_{2}}^{e_{2}}. \end{displaymath}
And, as before, one can find a square $Q_{2} \subset \interior Q_{1}$ such that $\calH^{s}(K \cap Q_{2}) \geq \ell(Q_{2})^{s}/(10A)$ and
\begin{equation}\label{form25} \overline{f_{e_{2}}(K \cap Q_{2})} \subset U^{e_{2}} \cap \overline{f_{e_{2}}(K \cap \interior Q_{1})} \subset F_{i_{2}}^{e_{2}}. \end{equation}
Once again, there exists an index $j \in \{1,\ldots,N\}$ such that 
\begin{displaymath} \Bd f_{e_{j}}(K \cap Q_{2}) \geq t. \end{displaymath}
But now $j \notin \{1,2\}$ by \eqref{form24} and \eqref{form25} combined.

Repeating the same argument for some $N$ steps eventually produces a square $Q_{N} \subset Q_{0}$ with $\calH^{s}(K \cap Q_{N}) \geq \ell(Q_{N})^{s}/(10A)$, and $f_{e_{j}}(K \cap Q_{N}) \subset F^{e_{j}}_{i_{j}}$ for all $1 \leq j \leq N$, and for some $i_{j} \in \N$. This contradicts the fact that
\begin{displaymath} \max_{j} \Bd f_{e_{j}}(K \cap Q_{N}) \geq t, \end{displaymath}
and the proof of Theorem \ref{main} is complete.

\section{Further results}\label{corollaries}

In this section, I discuss the proofs of Theorem \ref{dotProducts} and Corollary \ref{sumProduct}. One first proves a box dimension variant of Theorem \ref{dotProducts}:
\begin{thm} Assume that $\emptyset \neq K \subset \R^{2}$ is a bounded $\calH^{s}$-measurable $s$-AD-regular set with $s \geq 1$. Then, there exists a point $x_{0} \in K$ such that
\begin{displaymath} \Bd K \cdot (K - x_{0}) = 1. \end{displaymath}
\end{thm}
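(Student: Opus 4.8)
The plan is to run the proof of Theorem~\ref{mainBox} almost verbatim, with one simplifying change: the auxiliary maps $f_{e}$ are replaced by honest orthogonal projections $\pi_{e}$, which renders Lemma~\ref{errorEstimate} unnecessary. Fix $t \in (0,1)$. Since $K$ is bounded, $K \cdot (K - x_{0}) = \{x_{1} \cdot (x_{2} - x_{0}) : x_{1},x_{2} \in K\}$ is a bounded subset of $\R$, so $\Bd [K \cdot (K - x_{0})] \leq 1$ automatically; it therefore suffices to produce some $x_{0} \in K$ with $\Bd [K \cdot (K - x_{0})] \geq t$.

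First dispose of a degenerate case. Write $K = K_{r} \cup K_{u}$, where $K_{u}$ is the purely $1$-unrectifiable part of $K$. If $s = 1$ and $\calH^{1}(K_{u}) = 0$, then $K$ is $1$-rectifiable, so it contains a subset $\gamma(A)$ of positive $\calH^{1}$-measure sitting on a $C^{1}$-curve $\gamma \colon [0,L] \to \R^{2}$; fixing $x_{0} := \gamma(a_{0})$ for some $a_{0} \in A$ and applying the coarea formula to $(s,u) \mapsto \gamma(s) \cdot (\gamma(u) - x_{0})$ on $A \times A$ (or, in the special case that $\gamma(A)$ lies on a line, a direct computation), one checks that $K \cdot (K - x_{0}) \supset \gamma(A) \cdot (\gamma(A) - x_{0})$ has positive length, so $\Bd [K \cdot (K - x_{0})] = 1$. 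In the complementary cases — $s > 1$, or $s = 1$ with $\calH^{1}(K_{u}) > 0$ — the argument recorded in the proof of Theorem~\ref{mainBox} (Marstrand's slicing theorem when $s > 1$, pure unrectifiability of $K_{u}$ when $s = 1$) shows that for $\calH^{s}$-almost every $x_{0} \in K$ the direction set $R_{x_{0}} = \{(y - x_{0})/|y - x_{0}| : y \in K \setminus \{x_{0}\}\}$ is dense in $S^{1}$. Fix such an $x_{0}$ and abbreviate $E := K \cdot (K - x_{0})$.

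Now repeat the entropy argument. After a preliminary rescaling we may assume $K \subset [0,1]^{2}$. Let $m = m(A,t)$ be large, choose a $2^{-m}$-net $S_{2^{m}} \subset R_{x_{0}}$, and for each $e \in S_{2^{m}}$ fix $y_{e} \in K$ with $z_{e} := y_{e} - x_{0} \neq 0$ and $z_{e}/|z_{e}| = e$. The role previously played by the inclusion $f_{e}(K \cap Q_{0}) \subset D(K)$ is now taken by the elementary identity
\[ \pi_{e}(K) = \{x_{1} \cdot e : x_{1} \in K\} = \{ |z_{e}|^{-1} (x_{1} \cdot (y_{e} - x_{0})) : x_{1} \in K \} \subset |z_{e}|^{-1} E, \]
valid because $x_{1},y_{e},x_{0} \in K$. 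Put $\mu := \calH^{s}|_{K}/\calH^{s}(K) \in \calP([0,1]^{2})$; the maps being linear everywhere, the auxiliary square $Q_{0}$ of Theorem~\ref{mainBox} is not needed. Applying Lemma~\ref{multiScale} to the linear maps $\pi_{e}$ and using the remark leading to \eqref{form17} (which concerns exactly linear maps), and then invoking Corollary~\ref{entropyCor} — legitimate since the blow-ups $\mu^{Q}$ obey the same linear-growth bound as in the proof of Theorem~\ref{mainBox}, with the cubes $Q \in \calD_{km}$ satisfying $\mu(Q) > 0$ counted in the same way from the AD-regularity of $K$ — one obtains, for $m$ large and all large $n$,
\[ \sum_{e \in S_{2^{m}}} p_{e} \cdot H_{n}(\pi_{e\sharp}\mu) \geq t. \]
By Lemma~\ref{entropyAndCovering}, exactly as in the passage from the analogous bound to \eqref{form18} and then to \eqref{form22} in the proof of Theorem~\ref{mainBox}, this yields $\Bd \pi_{e^{*}}(K) \geq t$ for some $e^{*} \in S_{2^{m}}$.

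Finally, since $\pi_{e^{*}}(K) \subset |z_{e^{*}}|^{-1} E$ and upper box dimension is invariant under dilations,
\[ t \leq \Bd \pi_{e^{*}}(K) \leq \Bd\big( |z_{e^{*}}|^{-1} E \big) = \Bd E. \]
As $t \in (0,1)$ was arbitrary, $\Bd E \geq 1$, hence $\Bd [K \cdot (K - x_{0})] = 1$. The packing-dimension statement of Theorem~\ref{dotProducts} is then obtained along the same lines as Theorem~\ref{main} is obtained from Theorem~\ref{mainBox}, via the Baire-category argument of Section~\ref{packingDimension} applied to the sets $\pi_{e}(K \cap \interior Q)$, using that these are again scaled subsets of $E$. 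I do not expect any genuine obstacle: the only ingredient not already present in the proof of Theorem~\ref{mainBox} is the elementary scaling inclusion displayed above (one should merely resist describing the construction in terms of the non-linear maps $y \mapsto x_{1}\cdot(y - x_{0})$, which are themselves just scaled projections), and linearity of the $\pi_{e}$ in fact shortens the rest of the argument; the single point calling for a separate, but elementary, treatment is the degenerate rectifiable $s = 1$ case.
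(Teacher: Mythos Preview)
Your proposal is correct and follows essentially the same approach as the paper: the paper also observes the inclusion $\pi_{e}(K) \subset |x_{e} - x_{0}|^{-1}\, K \cdot (K - x_{0})$, replaces the nonlinear $f_{e}$ by linear projections (dispensing with Lemma~\ref{errorEstimate} and the small square $Q_{0}$), and treats the rectifiable $s=1$ case by a separate elementary argument. The only cosmetic difference is in the handling of that degenerate case---the paper argues via affine copies of one-dimensional projections rather than a coarea computation.
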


\begin{proof} The proof is extremely similar to that of Theorem \ref{mainBox}, even a bit easier. Once again, the result is elementary if $s = 1$ and $K$ contains a non-trivial rectifiable part: in this case $\calH^{1}(K \cdot (K - x_{0})) > 0$ for any $x_{0} \in K$. Indeed, either $K$ lies on a line spanned by some vector $e \in S^{1}$, and then $K \cdot (K - x_{0}) \supset x \cdot (K - x_{0})$ contains an affine copy of $K$ for any $x \in K \setminus \{0\}$. Or else one can find two linearly independent vectors $x_{1},x_{2} \in K$, in which case one of the orthogonal projections $\pi_{x_{1}/|x_{1}|}(K - x_{0})$ or $\pi_{x_{2}/|x_{2}|}(K - x_{0})$ has positive length, since otherwise $K - x_{0}$ (hence $K$ itself) would be purely $1$-unrectifiable. Then, it suffices to note that $K \cdot (K - x_{0})$ contains an affine copy of both of these projections.

So, one may assume that either $s =1$ and $K$ is purely $1$-unrectifiable, or $s > 1$. In both cases, there exists $x_{0} \in K$ such that 
\begin{displaymath} R_{x_{0}} := \left\{\frac{y - x_{0}}{|y - x_{0}|} : y \in K \setminus \{x_{0}\} \right\} \end{displaymath}
is dense in $S^{1}$.\footnote{In the proof for distance sets, one could assume that $x_{0} = 0$ at this stage. Here one cannot, since there is no obvious reason why the problem would be translation invariant. The erroneous "without loss of generality $x_{0} = 0$" statement made its way to an earlier version of the manuscript, and I am grateful to Pablo Shmerkin for pointing out the issue. If $\overline{R_{0}}$ happens to have non-empty interior, then the proof could be modified to show that $\Pd K \cdot K = 1$.} Then, one fixes $t \in (0,1)$ and $m \in \N$ and picks a set of vectors $S_{2^{m}} \subset R_{x_{0}}$ as in Corollary \ref{entropyCor}. For each $e \in S_{2^{m}}$, one locates a point $x_{e} \in K \setminus \{x_{0}\}$ such that $(x_{e} - x_{0})/|x_{e} - x_{0}| = e$, and then one considers the the family of orthogonal projections $\pi_{e}$, $e \in S_{2^{m}}$. Since
\begin{displaymath} \pi_{e}(K) = K \cdot e = \frac{K \cdot (x_{e} - x_{0})}{|x_{e} - x_{0}|} \subset \frac{K \cdot (K - x_{0})}{|x_{e} - x_{0}|}, \end{displaymath}
it suffices to prove that
\begin{equation}\label{form26} \sum_{e \in S_{2^{m}}} p_{e} \cdot N(\pi_{e}(K),\delta) \geq \delta^{-t} \end{equation}
for all small enough $\delta > 0$, if $m = m(A,t) \in \N$ was chosen large enough. This time one does not even need to restrict $K$ to a small square about the origin, and the rest of the proof runs exactly in the same was as that of Theorem \ref{mainBox}.
\end{proof}

The argument for the packing dimension version of Theorem \ref{dotProducts} is the same as given in Section \ref{packingDimension}, and there is no point in repeating the details.
\begin{proof}[Proof of Corollary \ref{sumProduct}] Assume that $1/2 \leq s \leq 1$, and $A \subset \R$ is a compact AD-regular set with $\Hd A = s$. Then $A \times A \subset \R^{2}$ is a compact $2s$-AD-regular set, since the product measure $\calH^{s}|_{A} \times \calH^{s}|_{A}$ is equivalent to $\calH^{2s}|_{A \times A}$ with uniform constants. It now follows from Theorem \ref{dotProducts} that there exists a point $(a_{1},a_{2}) \in A \times A$ such that
\begin{displaymath} 1 = \Pd (A \times A) \cdot (A \times A - (a_{1},a_{2})) = \Pd [A(A - a_{1}) + A(A - a_{2})], \end{displaymath}
as claimed. \end{proof}


\begin{thebibliography}{1}
\bibitem{BM} \textsc{A. S. Besicovitch and D. S. Miller}: \emph{On the set of distances between the points of Carath\'eodory linearly measurable plane point sets} Proc.  London Math. Soc. \textbf{50}, p. 305--316
\bibitem{Bo} \textsc{J. Bourgain}: \emph{On the Erd\H{o}s-Volkmann and Katz-Tao ring conjectures}, Geom. Funct. Anal. \textbf{13} (2003), p. 334--365
\bibitem{Fa} \textsc{K. Falconer}: \emph{On the Hausdorff dimension of distance sets}, Mathematika \textbf{32} (1985), p. 206--212
\bibitem{GK} \textsc{L. Guth and N. Katz}: \emph{On the Erd\H{o}s distinct distance problem in the plane}, Ann. of Math. \textbf{181} (2015), p. 1--36
\bibitem{Ho} \textsc{M. Hochman}: \emph{Self-similar sets with overlaps and inverse theorems for entropy}, Ann. of Math. \textbf{180}, No. 2 (2014), p. 773--822
\bibitem{Ma} \textsc{J.M. Marstrand}: \emph{Some fundamental geometrical properties of plane sets of fractional dimensions}, Proc. London Math. Soc. (3) \textbf{4} (1954), pp. 257-302
\bibitem{Mat} \textsc{P. Mattila}: \emph{Geometry of sets and measures in Euclidean spaces: fractals and rectifiability}, Cambridge University Press, 1995
\bibitem{O} \textsc{T. Orponen}: \emph{Projections of planar sets in well-separated directions}, available at arXiv:1504.07189
\bibitem{Wo} \textsc{T. Wolff}: \emph{Decay of circular means of Fourier transforms of measures}, Int. Math. Res. Not. (1999), p. 547--567

\end{thebibliography}
\end{document}